\newtheorem{theorem}{Theorem}[section]
\newtheorem{corollary}[theorem]{Corollary}
\newtheorem{definition}[theorem]{Definition}
\numberwithin{equation}{section} 
\theoremstyle{definition}
\newtheorem{remark}[theorem]{Remark}
\newtheorem{problem}{Problem}
\newcommand{\kk}{\mathbf{k}}
\newcommand{\xx}{\mathbf{x}}
\newcommand{\yy}{\mathbf{y}}
\newcommand{\zz}{\mathbf{z}}
\newcommand{\ww}{\mathbf{w}}
\title{Beyond Shapiro's problem: \\ from cyclic sums to ``graphic'' sums}
     \author{Sergey Sadov\footnotemark[1]\footnote{e-mail: serge.sadov@gmail.com}} 
\date{}
\begin{document}

\maketitle

\begin{abstract}
A review of author's work on cyclic inequalities of Shapiro-Diananda type and related optimization
problems is presented.

\smallskip
{\bf MSC}: 26D15, 26D20, 05C35

\smallskip
{\bf Keywords}: AM-GM inequality, Shapiro's cyclic sum, Shallit's minimization problem,
uniqueness of extremizer, graphic sums
\end{abstract}

\section{Introduction}
The purpose of this communication is to review some nonstandard inequalities and asymptotics obtained by the author in recent years and to explain the line of thought that led to them. This is a technical paper as much as a personal story; for this reason ``we hope that the author will be forgiven'' for writing in the first person.

If a short keyword is wanted to characterize the differently looking inequalities I intend to discuss, a good candidate is {\em quasi-cyclic}. 
While an attempt to devise a formal framework to accommodate everything here does not seem worthwhile, some common formal features can be pointed out:

\begin{itemize}
\item The functions being estimated are {\em homogeneous of order zero}. (Or a concerned inequality can be transformed into a form involving such functions).

\item The functions depend on $n$ variables, where $n$ is a parameter, although in some examples it will be a small constant, e.g.\ $n=3$. The function is defined as a {\em sum}\ of terms whose number grows with $n$ (often just equals $n$).

\item A typical inquiry concerns the {\em asymptotic behaviour}\ of the best estimate for $n$-th sum of the 
given type for large $n$.

\item The {\em term} is a fraction whose numerator and denominator are increasing functions of the variables upon which they depend; each of the $n$ variables appears at least once in some numerator and in some denominator. So the effect of changing any one variable
is not obvious.

\item There is a {\em pattern}\ prescribing the specific
form of the terms in a sum of the given type. 
A pattern can be {\em local}, e.g. $x_1/(x_2+x_3)$ is a pattern for Shapiro's sums, or it can involve potentially any number of variables, as will be seen in further examples.

\item The structure of the sum (indices of variables involved in the terms) is governed either by {\em symmetry} --- perfect (cyclic shift) or approximate (index shift and some boundary conditions), --- or in a combinatorial way, by a {\em graph}. 
\end{itemize}

This paper is neither meant to be a comprehensive survey of cyclic inequalities 
(for that consult \cite[Ch.~XVI]{Mitrinovic_1993}) or particularly Shapiro's and Shapiro-type inequalities 
(see e.g.~\cite{Clausing_1992}, \cite{S21b}), nor
a full compendium of the author's results in the papers
\cite{S16}--\cite{S22b}, \cite{KalachevSadov_2018}. Rather it is meant to give a glimplse of the inequalities and asymptotics that came into being over time --- by a deliberate mind wandering if not quite by chance --- after I learned about Shapiro's problem and its fascinating solution by Drinfeld. 

The focus will be not as much on the various {\em results} as on the motivating {\em questions}.

The values of the indeterminates are assumed to be nonnegative throughout and the denominators must be positive. 

First thing first, the foundation of all that follows is the celebrated AM-GM inequality between the arithmetic and geometric means. In the straightforward,
name-befitting form it reads
$$
 \frac{a_1+\dots+\dots+a_n}{n}\geq(a_1\cdot\dots \cdot a_n)^{1/n}
$$
but there are multiple other ways to present it, which will be relevant in the sequel.

Putting $p=a_1\cdot\dots\cdot a_n$, upon the substitution $a_i=p^{1/n}x_i/x_{i+1}$, we come to the AM-GM in the homogeneous form
$$
 \frac{x_1}{x_2}+\frac{x_2}{x_3}+\dots+\frac{x_n}{x_1}\geq n,
$$
which is the simplest, purest and the most important of all cyclic inequalities. The first summand $x_1/x_2$ 
determines the local pattern that repeats itself, with index shift, until the closing of the cycle.

One can also ``uncycle'' the latter cyclic problem
and ask, given a parameter $x$, what is the minimum value of the sum
$
x_1/x_2+\dots+x_{n}/{x_{n+1}}
$
assuming that $x_1=x$ and $x_{n+1}=1$ (say). This is a constrained optimization problem; the objective function involves the same local pattern as before and the cyclic condition $x_{n+1}=x_1$ is replaced by the boundary conditions containing a parameter. It was such a relief for me to realize that the AM-GM can be treated regularly and ``thoughtlessly'' by the Lagrange multipliers method after a fearful Cauchy's up-down inductive proof learned --- or, more precisely, presented to us --- in school!

So far, the ordering of the variables $x_1,\dots,x_n$
seemed to be of importance. However, one can also write the AM-GM in the form
\begin{equation}
\label{AMGMperm}
 \sum_{i=1}^n \frac{x_i}{x_{\sigma(i)}}\geq n,
\end{equation}
where $\sigma$ is any bijection of the set $[n]=\{1,\dots,n\}$ onto itself, in other words, a permutation of the index set. The permutation is always
a union of cycles, and there can be more than once cycle in $\sigma$. Formally, the inequality in this form
for a particular $n$ depends on the standard AM-GM for 
all cycle lengths $m\leq n$.

Quite another point of view at the AM-GM, providing an additional insight, is the dynamic programming form, due to Bellman:
determine, in a closed form if possible, the functions $f_n(x)$ defined by the recurrence
\begin{equation}
\label{AMGMfeq}
 f_{n}(x)=\max_{y>0} \left(\frac{x}{y}+f_{n-1}(y)\right).
\end{equation}
The answer is, of course, $f(x)=nx^{1/n}$.

For a quick look into what lies ahead, here are a few samples.
These numbered problems will be roadposts on our tour. 
Some are easy, some quite difficult,
but without the AM-GM they all would be hopeless. 

\begin{problem}
\label{prob:Nesbitt} Prove the inequality
$$
 \frac{a}{b+c}+\frac{b}{c+a}+\frac{c}{a+b}\geq\frac{3}{2}.
$$
\end{problem} 

\begin{problem}
\label{prob:Shapiro}
Is it true that for any $n$
$$
 \frac{x_1}{x_2+x_3}+\frac{x_2}{x_3+x_4}+\dots+\frac{x_{n-1}}{x_n+x_1}+\frac{x_n}{x_1+x_2}\geq \frac{n}{2}?
$$
\end{problem} 
  
\begin{problem}
\label{prob:minsum}
Show that
$$
\min\left(\frac{x_1}{x_3}+\frac{x_2}{x_3}+
\frac{x_3}{\min(x_1,x_2)}\right)=2\sqrt{2}.
$$
\end{problem}

\begin{problem}
\label{prob:extrminsum}
{\em Number game}: There are 40 people sitting in the room. 
Any two, say, $A$ and $B$,
are connected through a chain of friends: $B$ is a friend of (a friend of \dots) $A$.  
Friendship is not necessarily a symmetric relationship. (It is supposed that there is a chain from $A$ to $B$ and also from $B$ to $A$, not necessarily through the same people.)

Everyone thinks of a positive real number. Then everyone asks his/her friends of their numbers, takes their minimum, and divides own number by that minimum. The quotient is reported to the game's host.
Finally, the host sums up all the 40 reported quotients.
Question: can the sum ever be less than $9.8$?
\end{problem} 

\begin{problem}
\label{prob:maxsum}
Find the greatest lower bound of the sum
$$
 \frac{x_1}{x_2}+\frac{x_2}{\max(x_3,x_4)}+\frac{x_3}{\max(x_4,x_5)}
+\frac{x_4}{\max(x_1,x_6)}+\frac{x_5}{\max(x_1,x_2,x_4)}+
\frac{x_6}{\max(x_3,x_5)}.
$$
\end{problem} 

\begin{problem}
\label{prob:extrmaxsum}
{\em Number game}: The same setting as in Problem~\ref{prob:minsum}, but now everyone has exactly 12 friends and, instead of the minimum, he or she takes
the maximum of friends' values.
Again, the host sums up all the 40 reported quotients.
Can the sum be less than $4.9$?
\end{problem} 

\begin{problem}
\label{prob:Shallit}
Let $\xx=(x_1,\dots,x_n)$ be a vector with positive components.
Denote
$$
 g_n(\xx)=\sum_{j=1}^n \left(x_j+\frac{1}{x_j}\right)+\sum_{j=1}^{n-1}\frac{x_j}{x_{j+1}}.
$$
Show that there exists a positive constant $C$ such that
\begin{equation}
\label{ShAsym}
 \min_{\xx>0} g_n(\xx)=3n-C+o(1) 
 \qquad (n\to\infty).
\nonumber
\end{equation}
\end{problem} 

\begin{problem}
\label{prob:Mavlo} Prove a generalization of the inequality of Problem~\ref{prob:Nesbitt}:
for any $x>0$,
$$
 \frac{a}{b+cx}+\frac{b}{c+ax}+\frac{c}{a+bx}\geq\frac{3x}{1+x^3}.
$$
\end{problem} 

\begin{problem}
\label{prob:maxforward}
{\em Number game}: 
There are $2022$ people sitting in a circle. Everyone thinks of a positive real number. Let $x_i$ be the number chosen by the person sitting at the $i$-th place. Then everyone rolls a 2022-faced dice and gets some natural number $r_i\leq 2022$. He or she asks his/her $r_i$ immediate clockwise neighbors of their numbers and calculates their arithmetic mean; then reports $x_i$ divided by this mean to the game's host. Finally, the host sums up all the 2022 reported quotients.    
Question: can the result ever be less than $22$?
\end{problem} 

\begin{problem}
\label{prob:modifiedAMGM}
Prove the inequality
$$
 a+\frac{b}{a+1}+\frac{c}{b+1}+\frac{25/2}{c+1}\geq\frac{11}{2}.
$$
\end{problem} 

\section{Cyclic sums of Shapiro and Diananda}    

Problem~\ref{prob:Nesbitt} was pubilshed by Nesbitt in 1903 \cite{Nesbitt_1903}
according to \cite{Diananda_1977} who gives three proofs (reproduced in \cite{Mitrinovic_1993}). It has become popular as one of the ``what every math olympiad participant must know''. 

Problem~\ref{prob:Shapiro} due to Harold S.~Shapiro
is a natural generalization and the actual beginning of the ``theory'' of cyclic inequalities.
It had to take some courage from the proposer in 1954 to admit that he was unable to answer the seemingly elementary question ``obviously'' in the affirmative. 
The involvement of some big names, among them the number theorist L.\,J~Mordell, and a discovery that the answer was actually negative turned the innocently looking problem into a serious challenge: to find the true lower bound of Shapiro's cyclic sum divided by $n/2$. 

The origin of the difficulty lies in the fact that the shift-invariant $n$-tuple $(1,\dots,1)$ is not a minimizer for Shapiro's sum for even $n\geq 14$ and odd $n\geq 25$. (The case $n=23$ was solved numerically only in 1989 \cite{Troesch_1989}.)
Thus the symmetry of the problem does not transfer to
the symmetry of an extremizer. 

The analytical solution of Shapiro's problem with best constant was found by V.\,G.~Drinfeld in 1969 (when he was a ten-grader working under supervision of Prof.~V.L.~Levin). In it, the rearrangement (Chebyshev's)
inequality played the crusial role, along, of course, with
AM-GM. We refer to \cite{Clausing_1992} or \cite{Mitrinovic_1993} for details. 

It is natural to inquire about the best lower bound if one uses a pattern different from $x_1/(x_2+x_3)$ that reproduces itself with index shift in Shapiro's sum.  
Diananda \cite{Diananda_1959} proposed a choise that leads not to a single pattern but to a family of patterns: we just change the length of the denominators in Shapiro's sum. Call the expressions 
$$
 S_{n,k}(\xx)=k\sum_{j=1}^n \frac{x_j}{x_{j+1}+\dots+x_{j+k}}
$$
the Diananda sums. Here, as in Shapiro's case, the index $j+k$ is understood modulo $n$. The inclusion of the normalizing factor $k$ in the definition will be justified in Sec.~\ref{sec:psums} and from a different point of view in Sec.~\ref{sec:maxforward}. (The left-hand side of Shapiro's inequality is $\frac{1}{2}S_{n,2}$.)
Put 
$$
A_{n,k}=\min_{\xx} S_{n,k}(\xx).
$$
Obviously, $A_{n,k}\leq n$, since taking
all $x_j=1$ makes the sum equal $n$.   
Diananda found a sufficient condition under which the minimum is equal to $n$. For a fixed $k$, it yields 
a finite number of values of $n$. (Note that \cite[Theorem~3]{BoarderDaykin_1973} quote Diananda's theorem \cite[Theorem~1]{Diananda_1959} incorrectly.)

Diananda \cite{Diananda_1961} also showed that $n^{-1}A_{n,k}\geq k^{-1}$
for all $k\geq 1$ and $n\geq k$. His overly simple argument led me to doubt that $k^{-1}$ was the true asymptotic order of $\inf_n n^{-1}A_{n,k}$ as $k\to \infty$.  

This prompted the research \cite{S16} where I proved

\begin{theorem}
\label{thm:diananda}
For all $n\geq k\geq 1$ the inequality
$$
 k(2^{1/k}-1)\leq \frac{A_{n,k}}{n}\leq \gamma_k
$$
holds true, where $\gamma_k$ are roots of certain transcendental equations. They form a decreasing sequence;
$\gamma_2\approx 0.98913$ is Drinfeld's constant for Shapiro's problem and $\lim_{k\to\infty}\gamma_k\approx 0.930498$.
\end{theorem}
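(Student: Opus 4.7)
The theorem combines a universal lower bound on $A_{n,k}/n$ with the existence of near-optimal configurations whose value approaches $\gamma_k$; I would treat the two directions by unrelated methods.

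For the upper bound $A_{n,k}/n\le\gamma_k$, my plan is to construct explicit near-minimizers via the continuous-limit technique standard for Shapiro-type problems. Take $x_j=\exp\!\bigl(n\,\phi(j/n)\bigr)$ for a periodic $\phi:[0,1]\to\mathbb{R}$; substituting and using the Taylor expansion $\phi((j+i)/n)\approx\phi(j/n)+(i/n)\,\phi'(j/n)$, one finds that as $n\to\infty$,
\[
  \frac{S_{n,k}(\xx)}{n}\;\longrightarrow\;k\int_0^1 \Psi_k\!\bigl(u(t)\bigr)\,dt,\qquad u=\phi',
\]
for an explicit one-variable function $\Psi_k$. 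Since the integrand is pointwise in $u$, the minimizer subject to $\int_0^1 u\,dt=0$ is a two-valued step function, and the resulting three-parameter minimization reduces to a single transcendental equation whose root is $\gamma_k$. Monotonicity of $(\gamma_k)$ and the limit $\approx 0.9305$ then follow by analyzing how this equation depends on $k$.

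For the lower bound $A_{n,k}/n\ge k(2^{1/k}-1)$, the starting point is the identity $kx_j/d_j=k(F_j/d_j-1)$ with $F_j=x_j+x_{j+1}+\cdots+x_{j+k}$, reducing the claim to $\sum_j F_j/d_j\ge n\cdot 2^{1/k}$. By cyclic AM-GM this is implied by the multiplicative estimate
\[
  \prod_{j=1}^n \frac{F_j}{d_j} \;\ge\; 2^{n/k}.
\]
For $k=1$ this estimate follows immediately from $1+x_j/x_{j+1}\ge 2\sqrt{x_j/x_{j+1}}$ combined with the cyclic identity $\prod(x_j/x_{j+1})^{1/2}=1$; the exponent $1/k$ is sharp, as seen from asymptotically sparse configurations $x_j\in\{\varepsilon,1\}$ alternating. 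For general $k$ the inequality must be handled globally: the local version $F_jF_{j+1}\cdots F_{j+k-1}\ge 2\,d_jd_{j+1}\cdots d_{j+k-1}$ can fail (for instance at a single large coordinate surrounded by small ones), so cyclic cancellation is essential. My approach would be to combine weighted AM-GM on the decomposition $F_j=x_j+d_j$ with its dual $F_j=x_{j+k}+d_{j-1}$, averaging over $k$ consecutive cyclic shifts to extract the exponent $1/k$ on the constant $2$.

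The principal obstacle is the auxiliary inequality $\prod(F_j/d_j)\ge 2^{n/k}$. A single pointwise application of $F_j\ge 2\sqrt{x_jd_j}$ works only in the case $k=1$ and loses a factor growing with $k$, so the proof must exploit the combinatorial fact that each $x_m$ appears in exactly $k+1$ of the $F_j$'s and in $k$ of the $d_j$'s, producing the desired exponent by cyclic cancellation rather than a single-step estimate.
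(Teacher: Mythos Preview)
The paper does not actually prove Theorem~\ref{thm:diananda} in the text; it is imported from \cite{S16}, and the only methodological hint given is that the lower bound ``is obtained by a different (still elementary calculus) method'' from Drinfeld's rearrangement argument. So there is no in-text proof to match against; I can only assess your outline on its own terms and against that hint. Your upper-bound sketch via the exponential ansatz $x_j=\exp(n\phi(j/n))$ is the standard route for Shapiro-type constants and is consistent with the description of the $\gamma_k$ as roots of transcendental equations; the omitted details (identifying $\Psi_k$, justifying that the continuum infimum really equals $\inf_n A_{n,k}/n$) are substantial but routine.

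The lower bound, however, has a genuine gap. Your reduction to $\prod_j F_j/d_j\ge 2^{n/k}$ is correct and the target inequality even appears to be true and sharp (the periodic pattern of one $1$ followed by $k{-}1$ copies of $\varepsilon$ gives equality in the limit), but you have not proved it, and the phrase ``combine weighted AM--GM on $F_j=x_j+d_j$ with its dual $F_j=x_{j+k}+d_{j-1}$, averaging over $k$ cyclic shifts'' is not a proof. Multiplying the two pointwise bounds $F_j\ge 2\sqrt{x_j d_j}$ and $F_j\ge 2\sqrt{x_{j+k}d_{j-1}}$ and taking the cyclic product yields only $(\prod F_j/d_j)^2\ge 4^{\,n}\prod x_j/\prod d_j$, which is useless because $\prod d_j/\prod x_j$ can be as large as $k^n$; the exponent $1/k$ you need does not emerge from any averaging I can see, and you yourself note that the local $k$-block version fails. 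This is exactly the step where the argument lives, and it is missing. The paper's remark that the published proof relies on ``elementary calculus'' rather than pure AM--GM combinatorics also suggests that the intended mechanism is different in kind from the one you propose.
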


Note that $\lim_{k\to\infty} k(2^{1/k}-1)=\ln 2\approx 0.693$, so there is a considerable gap between the upper and lower bounds. For $k=2$ the lower bound coincides
with $\gamma_2$ by Drinfeld's proof and there is no gap.

For $k\geq 3$ Drinfeld's method does not seem to be applicable for obtaining the lower bound;
in the above theorem it is obtained by a different (still elementary calculus) method.

The situation about the gap is in fact a lot worse.
There is no known {\em numerical}\ method to approximate the true value of the constant $\inf_n n^{-1}A_{n,k}$
if $k\geq 3$. The old computation by Boarder and Daykin \cite{BoarderDaykin_1973}, quoted in  \cite[Eq.~(27.41)]{Mitrinovic_1993}, shows that for $k=3$ this constant does not exceed $0.97794$, which is consistent with numerical value of $\gamma_3\approx 0.97793$. But it is unclear how, in principle, one might numerically obtain a lower bound better than $3\cdot (2^{1/3}-1)\approx 0.77976)$ given by Theorem~\ref{thm:diananda}.

Unable to close the mentioned gap, I followed P\'{o}lya's advice ``if you cannot solve the given problem, try to find a similar one that you can solve''. Variants abound
and the exposition forks here. First I will describe a path leading to the notion of graphic sums and optimization problems related to them. This will be a rather long story. An independent short thread forking from here opens in Sec.~\ref{sec:maxforward}.

\section{The Shapiro-Diananda $p$-sums}    
\label{sec:psums}

Consider the power means $M_{k,p}(x_1,\dots,x_k)=[(x_1^p+\dots+x_k^p)/k]^{1/p}$.   

\begin{definition}
The {\em Shapiro-Diananda cyclic $p$-sums}\ are the functions of $n$ nonnegative variables
\begin{align}
  S_{n,k,p}(x_1,\dots,x_n)=\sum_{j=1}^n \frac{x_j}{M_{k,p}(x_{j+1},\dots,x_{j+k})}.
\label{Snkp}
\end{align}	
As before, the indices are treated modulo $n$.
\end{definition} 

For $p=1$, these sums coincide with Diananda's sums $S_{n,k}$ introduced earlier. With the chosen normalization we have the monotonicity property:
$S_{n,k,p}(\xx)>S_{n,k,p'}(\xx)$ whenever $p<p'$,
due to the monotonicity of power means \cite[\S~2.9]{HardyLittlewoodPolya1934}.

Denote 
$$
 A_{n,k,p}=\inf_{\xx} S_{n,k,p}(\xx)
$$
and 
$$
 B_{k,p}=\inf_{n\geq 1}\frac{A_{n,k,p}}{n}.
$$

Choosing $\xx=(1,\dots,1)$ we see that 
$A_{n,k,p}\leq n$ for any $n$, $k$ and $p$; hence
$B_{k,p}\leq 1$ for any $k$ and $p$.

This section does not go far in depth. It rather 
prepares an excuse for a further generalization.
More could be said here if what looks doable
were done, 
which is the following:

\begin{enumerate}[(i)]
\item 
Find the analog of Drinfeld's result for $k=2$ and any $p$
namely, give a constructive analytic description of the constant $B_{2,p}$ in terms of some transcendental equation; explore the properties of the function $p\mapsto B_{2,p}$. 

\item 
Establish an analog of Theorem~\ref{thm:diananda} for general values of $p$ (perhaps separately for $p>0$,
$p<0$ and in the special case $p=0$). Explore the behaviour of the obtained bounds as functions of $p$.
\end{enumerate}

The rationale for (i) is that Drinfeld's method works well for a variety of patterns involving the variables $x_1$, $x_2$, $x_3$, see e.g.\ a review of results by Godunova and Levin in \cite{Mitrinovic_1993}.

As concerns (ii), let me present one simple case.

\begin{theorem}
\label{thm:p-1}
For any $k\geq 1$, the equality $B_{k,-1}=1$ holds true.
\end{theorem}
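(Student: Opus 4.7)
The plan is to reduce $S_{n,k,-1}$ to a sum of cyclic AM-GM expressions and apply the permutation form \eqref{AMGMperm} of AM-GM. Since we already know $B_{k,-1}\leq 1$ (by taking $\xx=(1,\dots,1)$), the work is to prove $B_{k,-1}\geq 1$, i.e., $A_{n,k,-1}\geq n$ for every $n\geq 1$.

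First I would unpack the harmonic mean. For $p=-1$,
$$
 M_{k,-1}(x_{j+1},\dots,x_{j+k})=\frac{k}{x_{j+1}^{-1}+\dots+x_{j+k}^{-1}},
$$
so each term of $S_{n,k,-1}$ splits additively:
$$
 \frac{x_j}{M_{k,-1}(x_{j+1},\dots,x_{j+k})}
 =\frac{1}{k}\sum_{i=1}^k\frac{x_j}{x_{j+i}}.
$$
Summing over $j$ and interchanging the order of summation gives
$$
 S_{n,k,-1}(\xx)=\frac{1}{k}\sum_{i=1}^k\;\sum_{j=1}^n\frac{x_j}{x_{j+i}},
$$
with indices taken modulo $n$.

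Next I would observe that for each fixed $i\in\{1,\dots,k\}$ the map $\sigma_i:j\mapsto j+i\pmod n$ is a permutation of $[n]$ (its orbits all have length $n/\gcd(n,i)$). Hence the inner sum is an instance of the permutation form of the AM--GM inequality \eqref{AMGMperm}:
$$
 \sum_{j=1}^n\frac{x_j}{x_{\sigma_i(j)}}\geq n.
$$
Averaging this over $i=1,\dots,k$ yields $S_{n,k,-1}(\xx)\geq n$ for every positive $\xx$, so $A_{n,k,-1}\geq n$. Combined with the opposite inequality obtained from $\xx=(1,\dots,1)$, we get $A_{n,k,-1}=n$ for every $n$, and therefore $B_{k,-1}=1$.

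There is essentially no obstacle here: once the harmonic-mean denominator is expanded, the $p=-1$ case collapses into a convex combination of $k$ permutation-type AM--GMs and the bound $B_{k,-1}\geq 1$ falls out immediately. The very ease of this argument is what makes $p=-1$ ``the simple case'' alluded to in (ii); the difficulty for other values of $p$ lies precisely in the fact that no such term-by-term splitting into pure monomial ratios is available.
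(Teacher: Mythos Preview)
Your argument is correct and essentially identical to the paper's: both expand the harmonic-mean denominator so that $S_{n,k,-1}$ becomes an average of $k$ cyclic-shift sums $\sum_j x_j/x_{j+i}$ (the paper writes this via the substitution $y_j=x_j^{-1}$, which amounts to the same thing) and then apply the permutation form \eqref{AMGMperm} of AM--GM to each. The only cosmetic difference is that you skip the reciprocal substitution and work directly with the $x_j$'s.
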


\begin{proof}
It is easy to see that
$$
 S_{n,k,-1}(\xx)=\frac{1}{k}\sum_{j=1}^n\frac{y_{j+1}+\dots+y_{j+k}}{y_j},
$$
where $y_j=x_j^{-1}$.
The sum in the right-hand side consists of $k$ parts, each of which can be written as
$\sum_{j=1}^n y_{\sigma(j)}/{y_j}$, where $\sigma$ is 
a certain cyclic shift $j\mapsto j+r$, $r\in\{1,\dots,k\}$.
By the AM-GM inequality in the form \eqref{AMGMperm}, we get $S_{n,k,-1}(\xx)\geq n$.
\end{proof}

\begin{corollary}
For any $k$ and any $p\leq-1$ we have $B_{k,p}=1$.
In particular, it is so in the limiting case $p=-\infty$,
where $M_{k,-\infty}(\xx)=\min x_j$.
\end{corollary}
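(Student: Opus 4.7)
The plan is to derive the corollary directly from Theorem~\ref{thm:p-1} by exploiting the monotonicity of the $p$-sums in the parameter $p$ noted right after the definition of $S_{n,k,p}$.

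First I would record the trivial upper bound $A_{n,k,p}\leq n$ obtained by evaluating at $\xx=(1,\ldots,1)$ (for which $M_{k,p}=1$ and the sum equals $n$); this already gives $B_{k,p}\leq 1$ for every $p$. So it suffices to establish the matching pointwise lower bound $S_{n,k,p}(\xx)\geq n$, from which $A_{n,k,p}\geq n$ and hence $B_{k,p}=1$.

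For a finite exponent $p\leq -1$, the classical power mean inequality gives $M_{k,p}(\xx)\leq M_{k,-1}(\xx)$ for every positive argument, so $1/M_{k,p}\geq 1/M_{k,-1}$ pointwise, and therefore term-by-term
$$
 S_{n,k,p}(\xx)\;\geq\; S_{n,k,-1}(\xx).
$$
The proof of Theorem~\ref{thm:p-1} in fact delivers the pointwise bound $S_{n,k,-1}(\xx)\geq n$ via the AM-GM inequality in the permutation form~\eqref{AMGMperm}, which closes this case.

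For the limiting case $p=-\infty$, where $M_{k,-\infty}(\xx)=\min_j x_j$, I would use the elementary fact that the minimum never exceeds the harmonic mean, $\min_j x_j\leq M_{k,-1}(\xx)$, to obtain $S_{n,k,-\infty}(\xx)\geq S_{n,k,-1}(\xx)\geq n$ by the same reasoning. I do not anticipate a real obstacle here: the statement is a clean corollary of Theorem~\ref{thm:p-1}, and the only mild technicality is to treat $p=-\infty$ separately from finite $p\leq -1$, which is a one-line comparison with the harmonic mean.
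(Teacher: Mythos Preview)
Your argument is correct and is precisely the intended one: the paper states the corollary without a separate proof because it follows immediately from the monotonicity $S_{n,k,p}\geq S_{n,k,-1}$ for $p\leq -1$ (noted right after the definition of $S_{n,k,p}$) together with the pointwise bound $S_{n,k,-1}(\xx)\geq n$ established in Theorem~\ref{thm:p-1}. Your separate handling of $p=-\infty$ via $\min_j x_j\leq M_{k,-1}(\xx)$ is a harmless elaboration of the same monotonicity.
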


\begin{remark}
For $p=0$, in the definition of $S_{n,k,0}$ the $0$-th order means are the geometric means: $M_{k,0}(x_{j+1},\dots,x_{j+k})=(x_{j+1}\dots x_{j+k})^{1/k}$. 
The reader can easily verify, that the AM-GM inequality implies the equality $B_{k,0}=1$ and therefore $B_{k,p}=1$ for all $p\leq 0$.   The optimal bounds $\nu_k=\sup\{p\mid B_{k,p}=1\}$ are unknown.
Diananda \cite{Diananda_1974} proved that $\nu_2\geq(\sqrt{5}-1)/2$.
\end{remark}

There is another  fully tractable and more interesting
case: $p=+\infty$, where $M_{k,+\infty}(\xx)=\max x_j$.
Here we know {\em a priori}\ that $B_{k,+\infty}<1$,
since $B_{k,+\infty}\leq B_{k,1}$ and $B_{k,1}<1$
by Theorem~\ref{thm:diananda}. Recall that $\lfloor\cdot\rfloor$ denotes the ``floor'' function
(the greatest integer lower bound for the given number).

\begin{theorem}
\label{thm:cyclic-infty}
The formula
$$
 A_{n,k,+\infty}=\left\lfloor\frac{n+k-1}{k}\right\rfloor
$$
holds true. Consequently, 
$$
 B_{k,+\infty}=1/k.
$$
\end{theorem}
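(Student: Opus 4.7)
The plan is to establish $A_{n,k,+\infty} = \lceil n/k\rceil$ (which equals $\lfloor (n+k-1)/k\rfloor$) by matching upper and lower bounds; the formula $B_{k,+\infty} = 1/k$ then follows by restricting to $n=mk$, since $\lceil mk/k\rceil/(mk) = 1/k$ while $\lceil n/k\rceil/n \ge 1/k$ in general.

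For the upper bound, I would exhibit a near-extremizer supported on a minimum cyclic ``$k$-dominating set''. Placing peaks at positions $k, 2k,\ldots, \lfloor n/k\rfloor k$, together with $n$ itself when $k\nmid n$, yields a set $P\subset[n]$ of size $\lceil n/k\rceil$ such that every $k$ consecutive indices (mod $n$) meet $P$. Setting $x_j = 1$ for $j\in P$ and $x_j = \varepsilon$ otherwise makes each denominator $M_{k,+\infty}(x_{j+1},\ldots,x_{j+k})$ equal to $1$, so $S_{n,k,+\infty}(\xx) = \lceil n/k\rceil + (n-\lceil n/k\rceil)\varepsilon \to \lceil n/k\rceil$ as $\varepsilon\to 0^+$.

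For the lower bound, given positive $\xx$, I would choose for each $j$ an index $\sigma(j)\in\{j+1,\ldots,j+k\}$ (mod $n$) realizing $\max(x_{j+1},\ldots,x_{j+k})$, so that
$$S_{n,k,+\infty}(\xx) = \sum_{j=1}^n\frac{x_j}{x_{\sigma(j)}}.$$
Since $\sigma\colon[n]\to[n]$ is a self-map of a finite set, its functional graph contains at least one cycle $C=(j_0,\ldots,j_{\ell-1})$ of some length $\ell$. Each step advances the index by some $r_i\in\{1,\ldots,k\}$ modulo $n$, and the total advance around the cycle must be a positive multiple of $n$; hence $\ell k \ge \sum_i r_i \ge n$, giving $\ell \ge \lceil n/k\rceil$. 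The cyclic product $\prod_{j\in C} x_j/x_{\sigma(j)}$ telescopes to $1$, so AM-GM yields $\sum_{j\in C} x_j/x_{\sigma(j)} \ge \ell$, and discarding the remaining nonnegative terms gives $S_{n,k,+\infty}(\xx) \ge \lceil n/k\rceil$.

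I do not foresee any serious obstacle: the only step requiring attention is the combinatorial bound $\ell \ge \lceil n/k\rceil$ on the cycle length of $\sigma$, and this is immediate once the jumps along the cycle are unpacked modulo $n$. Everything else is a direct AM-GM argument paired with an explicit construction.
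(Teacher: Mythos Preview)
Your proposal is correct and follows essentially the same route that the paper points to. The paper does not write out a proof but cites Diananda's reduction to AM--GM and its own general graphic max-sum theory, where $\inf_{\xx}S_\lor(\xx|\Gamma)=g(\Gamma)$ for strongly connected $\Gamma$; your lower-bound argument (choose $\sigma(j)$ realizing the maximum, find a cycle in the functional graph, bound its length from below via the step-size constraint, and apply AM--GM along the cycle) is precisely the girth computation for the cyclic Diananda graph, and your upper-bound construction via a minimum cyclic $k$-dominating set is the standard near-extremizer matching that girth.
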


This theorem is proved in \cite{Diananda_1973} by reduction to the AM-GM inequality in a judicious way. It also follows from 
the more general theory \cite[\S~6]{S21b}, to which we soon turn.

As a corollary we get a partial result in the direction (ii) from the above ``to do'' list.
\begin{theorem}
\label{thm:cyclic-p}
For any $p\in(1,\infty)$ the inequality
\begin{equation}
\label{Bkp_lb}
 B_{k,p}\geq B_{k,1}^{1/p} \,
\frac{k^{-1/q^{2}}}{k-1} U_{1/p}(k) U_{1/q}(k)
\end{equation}
holds true, where $q$ is the conjugate to $p$ exponent: $1/q+1/p=1$ and 
$$
 U_t(k)=
(k^{t}-1)^t\,{t}^{-t}.
$$
\end{theorem}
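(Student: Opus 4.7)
The plan is to interpolate between the already solved cases $p=1$ (Theorem~\ref{thm:diananda}) and $p=+\infty$ (Theorem~\ref{thm:cyclic-infty}) by combining a sharp pointwise bound on $M_{k,p}$ with a reverse H\"older inequality. The starting point is the elementary estimate
\[
M_{k,p}(\yy)^p=\frac{1}{k}\sum_i y_i^{\,p-1}\cdot y_i\le M_{k,\infty}(\yy)^{\,p-1}M_{k,1}(\yy),
\]
which, upon raising to the $1/p$ power, gives $M_{k,p}(\yy)\le M_{k,\infty}(\yy)^{1/q}M_{k,1}(\yy)^{1/p}$. Writing $u_j=x_j/M_{k,1}(\yy_j)$ and $v_j=x_j/M_{k,\infty}(\yy_j)$ with $\yy_j=(x_{j+1},\dots,x_{j+k})$, this upgrades to the termwise inequality $S_{n,k,p}(\xx)\ge\sum_{j=1}^n u_j^{1/p}v_j^{1/q}$. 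The crucial additional information is that $M_{k,1}(\yy_j)\le M_{k,\infty}(\yy_j)\le k\,M_{k,1}(\yy_j)$, so the ratios $r_j:=u_j/v_j$ are all confined to the compact interval $[1,k]$.

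The heart of the argument is a sharp reverse H\"older inequality: for positive $u_j,v_j$ with $u_j/v_j\in[1,k]$,
\[
\sum_j u_j^{1/p}v_j^{1/q}\ge K(k,p)\Bigl(\sum_j u_j\Bigr)^{1/p}\Bigl(\sum_j v_j\Bigr)^{1/q}.
\]
Introducing weights $w_j=v_j/\sum_\ell v_\ell$, this reduces to minimising the ratio $\mathbb E_w r^{1/p}/(\mathbb E_w r)^{1/p}$ over probability distributions $w$ supported in $[1,k]$. Concavity of $t\mapsto t^{1/p}$ implies that for each fixed mean $\bar r\in[1,k]$ the minimum is attained by a two-point distribution at $\{1,k\}$; a one-variable calculus exercise over $\bar r$ then locates the extremum at $\bar r^{\ast}=(q/p)\,k^{1/p}(k^{1/q}-1)/(k^{1/p}-1)$. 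Substituting and regrouping yields
\[
K(k,p)=\frac{k^{1/(pq)}}{k-1}(k^{1/p}-1)^{1/p}(k^{1/q}-1)^{1/q}\Bigl[(q/p)^{1/q}+(p/q)^{1/p}\Bigr],
\]
and the bracketed sum collapses to $p^{1/p}q^{1/q}$ by dividing through by $p^{1/p}q^{1/q}$ and recognising the resulting $p^{-1}+q^{-1}=1$. Hence $K(k,p)=k^{1/(pq)}(k-1)^{-1}U_{1/p}(k)U_{1/q}(k)$.

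To finish I would assemble the bound using $\sum_j u_j=S_{n,k,1}(\xx)\ge B_{k,1}n$ (by definition of $B_{k,1}$) and, by Theorem~\ref{thm:cyclic-infty}, $\sum_j v_j=S_{n,k,\infty}(\xx)\ge n/k$, giving
\[
S_{n,k,p}(\xx)\ge K(k,p)\,B_{k,1}^{1/p}\,k^{-1/q}\,n.
\]
Dividing by $n$, taking the infimum over $\xx$ and $n$, and simplifying $1/(pq)-1/q=-1/q^2$ (since $p-1=p/q$ gives $(p-1)/(pq)=1/q^2$) yields exactly~\eqref{Bkp_lb}. The main obstacle is the sharp reverse H\"older computation: the two-point reduction is forced by concavity, but the decisive subtlety is to recognise the algebraic identity $(q/p)^{1/q}+(p/q)^{1/p}=p^{1/p}q^{1/q}$ as nothing but a disguise of the conjugacy relation, which alone brings the bound into the compact closed form stated.
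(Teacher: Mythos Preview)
Your argument is correct and follows essentially the same route as the paper's own proof: bound $M_{k,p}$ by $M_{k,1}^{1/p}M_{k,\infty}^{1/q}$, apply a reverse H\"older inequality under the ratio constraint $M_{k,\infty}/M_{k,1}\in[1,k]$, and feed in the known bounds $B_{k,1}$ and $B_{k,\infty}=1/k$. The only difference is cosmetic: the paper sets $u_j=(x_j/M_{k,1})^{1/p}$, $v_j=(x_j/M_{k,\infty})^{1/q}$ (the $p$-th and $q$-th roots of your $u_j,v_j$) and \emph{cites} the sharp reverse H\"older constant of Diaz--Goldman--Metcalf, whereas you rederive that constant from scratch via the two-point reduction and the identity $(q/p)^{1/q}+(p/q)^{1/p}=p^{1/p}q^{1/q}$, making your write-up self-contained.
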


\begin{proof}
Put $y_{j,k,p}=M_{k,p}(x_{j+1},\dots,x_{j+k})$.
Denote $u_j=(x_j/y_{j,k,1})^{1/p}$ and $v_j=(x_j/y_{j,k,\infty})^{1/q}$. 
Then $S_{n,k,p}(\xx)=\sum_{j=1}^n u_j v_j =(u,v)$,
while $\|u\|_{p}^p 
=S_{n,k,1}(\xx)$
and $\|v\|_q^q 
=S_{n,k,\infty}(\xx)$.

By the convexity of means \cite[\S~2.9]{HardyLittlewoodPolya1934}, we have $y_{j,k,p}\leq y_{j,k,1}^{1/p} y_{j,k,\infty}^{1/q}$.

Also we have $1\leq u_j^p/v_j^q\leq k$ for all $j$.
By the reversed H\"{o}lder inequality due to Diaz, Goldman and Metcalf (see \cite{Nehari_1968}, \cite[Ch.~V, \S~13]{Mitrinovic_1993}), $\|u\|_p\|v\|_q\leq C_{p,q}(k) \cdot(u,v)$.
Substituting the expression for $C_{p,q}(k)$ from the cited references and the estimates $\|u\|_p\geq B_{k,1}^{1/p}$ and $\|v\|_q\geq k^{-1/q}$ (by Theorem~\ref{thm:cyclic-infty}), we come to the claimed result.
\end{proof}

\begin{remark}
The factor $k^{-1/q^2}/(k-1) U_{1/p}(k)U_{1/q}(k)$ in \eqref{Bkp_lb} decreases from $1$ to $1/k$ as $p$ increases from $1$ to $\infty$ for any $k>1$.
\end{remark}

\section{Graphic sums}    
\label{sec:gsums}

As explained in the previous section, the problem of 
determining the constant $B_{k,p}$ is completely trivial in the case $p=-\infty$ (as a consequence of the estimate for $p=-1$) and tractable in a rather elementary way in the case $p=+\infty$. In these two limiting cases there is no need of any calculus beyond the AM-GM inequality: no critical points, no Lagrange multipliers.    

It seems therefore plausible that meaningful lower bounds for sums with minimum or maximum of some subsets of indeterminates in the denominators can be obtained in
a more general situation. These considerations prompted me to introduce the notion of ``graphic'' $p$-sums as a generalization of cyclic $p$-sums and to explore the cases $p=\pm\infty$ in depth.

\begin{definition}
Let $\Gamma$ be a directed graph, $V$ be the set of its vertices, and $\Gamma^+(v)$ denote the out-neighborhood of a vertex $v\in V$. Let $\mathbf{x}$ be a vector with components labeled by $v\in V$. 
The graphic $p$-sum corresponding to the graph $\Gamma$
is the function
$$
 S_p(\mathbf{x}|\Gamma)=\sum_{v\in V} \frac{x_v}{M_p(\xx|\Gamma^+(v))},
$$
where, for the given set $\Omega\subset V$, 
$$
M_p(\xx|\Omega)=\left(\sum_{v'\in\Omega} x_{v'}^p\right)^{1/p}.
$$
In particular, we define the ``graphic min-sums'' corresponding to the case $p=-\infty$ 
$$     
S_{\land}(\xx|\Gamma)=\sum_{v\in V} \frac{x_v}{\min_{v'\in\Gamma^+(v)} x_{v'}}.
$$
and the ``graphic min-sums'' corresponding to the case $p=+\infty$ 
$$
   S_{\lor}(\xx|\Gamma)=\sum_{v\in V}
   \frac{x_v}{\max_{v'\in\Gamma^+(v)} x_{v'}}.
$$
\end{definition}

The Shapiro-Diananda sum $S_{n,k,p}$ corresponds to the 
graph $\Gamma$ with cyclic symmetry: the directed edges beginning at the vertex $i$ have ends $i+1,\dots,i+k$.

\section{Graphic min-sums}
\label{sec:gminsum}

Unlike in Section~\ref{sec:psums}, the lower bounds
for graphic min-sums are in general nontrivial.

Let us fix the graph $\Gamma$. For any given vector
$\xx$ we have
\begin{equation}
\label{Sratio}
 S_\land(\xx|\Gamma)=\sum_{v\in V}\frac{x_v}{x_\mu(v)},
\end{equation}
where $\mu(v)\in\Gamma_{\min}^+(v|\xx)$
and $\Gamma^+_{\min}(v|\xx)$ denotes the set of indices 
that minimize $x_w$ over $w\in \Gamma^+(v)$.
If all the values $x_v$ are distinct (and only then), then $\Gamma^+_{\min}(v)$ is a one-element set for every $v$
and the function $v\mapsto \mu(v)$ is defined uniquely.

Let me emphasize that the function $\mu(v)$ depends on the vector $\xx$ and possibly (if not all $x_i$ are distinct) also on a finite choice. Yet the number of all possible functions $\mu(\cdot)$ is finite (the upper bound is $n^n$, where $n=|V|$). So in principle the minimizing solution can be found as follows. (Or so it seems\dots.)

Take any function $\mu:\;V\mapsto V$ such that
$\mu(v)\in\Gamma^+(v)$ for all $v\in V$. 
For this function, find the minimum of the sum in the right-hand side of \eqref{Sratio}. For any minimizer $\xx$, check whether the consistency condition
$\mu(v)\in\Gamma^+_{\min}(v|\xx)$ is met for all $v$. If not, reject $\xx$. 

Do it for all admissible functions $\mu(\cdot)$. 
Compare the values of the objective function
corresponding to the minimizers that were not rejected.
Find the minimum; that's it.

The number of functions $\mu(\cdot)$ is huge, but for every single one the minimization looks easy, basically amounting to the AM-GM, taking into account that not all of the $v$'s appear as indices of the variables in the denominators in \eqref{Sratio}. In any case, the algorithm assures us that the answer will be an integer.

But \dots we have Problem~\ref{prob:minsum} corresponding to the graph $\Gamma$ on three vertices, $V=\{1,2,3\}$, with edges $1\to 3$, $2\to 3$, $3\to 1$, and $3\to 2$. And the answer is irrational!

Can you spot a flaw in the algorithm? Does it guarantee
that after all the required rejections anything is left?
(I highly recommend to work out Problem~\ref{prob:minsum} fully to get the idea.)

In fact, the proposed algorithm becomes valid after a  fix. We must organize a loop over combinatorial
structures other than functions $V\to V$, called {\em preferrential arrangements}
on $V$, to put in one block variables that must have equal values in a candidate minimizer. Then we put new variables, $y_B$, in correspondence to each block.
We refer to \cite[\S~7]{S21b} for details. 
The equation \eqref{Sratio} is rewritten as
\begin{equation}
\label{Bratio}
 S_\land(\xx|\Gamma)=\sum_{B} |B|\frac{y_B}{y_{\nu(B)}}
\end{equation}
with an appropriate function $\nu(\cdot)$ 
that replaces
$\mu(\cdot)$; $\nu$ maps the set of blocks into itself.
In Problem~\ref{prob:minsum} this general scheme yields the unconstrained optimization problem
$2y_I/y_{II}+y_{II}/y_I\to\min$ with blocks being $I=\{1,2\}$ and $II=\{3\}$.

Problem~\ref{prob:minsum} and the above general discussion are concerned with minimization of the min-sum for an individual graph.

Problem~\ref{prob:extrminsum} is of a somewhat different nature. You have to estimate the lower bound of a graphic min-sum, but the graph of friendship $\Gamma$ is not specified. The only constraints are: the size of the vertex set $|V|=40$ and (importantly) the property of strong connectendess of $\Gamma$ (worded in terms of the chains of friends). The optimization space
here is the Cartesian product of $\mathbb{R}^n_+$ (values of vectors $\xx$) and the set of all strongly connected directed graphs of $n$ vertices. 
It is an instance of extremal graph problems.

The following inequality is a part of Theorem~4 in \cite{S21b}.

\begin{theorem}
\label{thm:extrmin}
For any strongly connected graph $\Gamma$ on $n$ vertices the minimum value of the min-sum determined by $\Gamma$ satisfies the estimate
$$
 \min_{\xx}S_{\land}(\xx|\Gamma)> e\ln(n+1-\ln(n+1)).
$$
\end{theorem}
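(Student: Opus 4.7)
The plan is to use strong connectivity to extract a simple directed path from a maximum-valued to a minimum-valued vertex, apply AM-GM along that path, bound the remaining terms trivially, and then carry out a short scalar optimization.

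By homogeneity of $S_\land$ I first normalize so that $\min_v x_v = 1$ and set $M := \max_v x_v \ge 1$. Strong connectivity of $\Gamma$ provides a directed walk from some $v^{\max}$ with $x_{v^{\max}}=M$ to some $v^{\min}$ with $x_{v^{\min}}=1$; contracting cycles yields a simple directed path
$$
v^{\max} = u_0 \to u_1 \to \dots \to u_k = v^{\min}, \qquad 1 \le k \le n-1.
$$
Since $u_{i+1}\in\Gamma^+(u_i)$ we have $x_{\mu(u_i)}\le x_{u_{i+1}}$, and AM-GM yields
$$
\sum_{i=0}^{k-1} \frac{x_{u_i}}{x_{\mu(u_i)}} \;\ge\; k\left(\frac{x_{u_0}}{x_{u_k}}\right)^{1/k} = k\, M^{1/k}.
$$
Each of the remaining $n-k$ vertices $v$ contributes $x_v/x_{\mu(v)}\ge 1/M$, trivially since $x_v\ge 1$ and $x_{\mu(v)}\le M$. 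Summing,
$$
S_\land(\xx|\Gamma) \;\ge\; k\, M^{1/k} + \frac{n-k}{M}.
$$

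The final task is to show that the right-hand side exceeds $e\ln(n+1-\ln(n+1))$ for every admissible pair $(k,M)$. Relaxing $k$ to a real variable in $[1,n-1]$ only weakens the bound; the inner minimization in $M$ is attained at $M^\star = (n-k)^{k/(k+1)}\ge 1$, collapsing the expression to $(k+1)(n-k)^{1/(k+1)}$. Writing $s=k+1\in[2,n]$, the problem reduces to
$$
\min_{s\in[2,n]}\psi(s) \;>\; e\ln(n+1-\ln(n+1)), \qquad \psi(s):=s(n+1-s)^{1/s}.
$$
The identity $\psi(s)=es \Longleftrightarrow e^s+s=n+1$ locates a Lambert-W-type fixed point $\hat s$; iteration of the contraction $s\mapsto\ln(n+1-s)$ places $\hat s$ in the interval $(s_0, \ln(n+1))$ with $s_0:=\ln(n+1-\ln(n+1))$. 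The stationarity equation at the unique interior minimizer $s^\star$ yields the compact formula $\psi(s^\star)=s^\star\exp(1-1/(n+1-s^\star))$, and a monotonicity-based comparison then produces the strict inequality $\psi(s^\star)>e s_0$.

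The main obstacle is precisely this last scalar comparison. It is numerically tight -- the two sides differ by $O(\ln n/n^2)$ -- so a clean symbolic proof must carefully track how $\psi$ descends from the value $e\hat s$ attained at $\hat s$ down to its minimum at $s^\star>\hat s$, and verify that the minimum remains above $e s_0<e\hat s$. All the structural inputs (cycle removal to get a simple path, AM-GM along it, the trivial lower bound $1/M$ on each remaining term) follow mechanically from strong connectivity and the definitions; the work is concentrated in the one-dimensional Lambert-W-type analysis at the end.
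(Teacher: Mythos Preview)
Your reduction to the scalar problem is correct and coincides with the paper's. The paper frames it as first identifying the extremal graph $\Gamma_*$ and then computing its min-sum exactly as
\[
\min_{\xx}S_\land(\xx\mid\Gamma_*)=\min_{1\le k\le n-2}(k+1)(n-k)^{1/(k+1)};
\]
your path-plus-AM-GM argument (simple directed path from a maximum vertex to a minimum vertex, telescoping AM-GM along it, trivial bound $1/M$ on the remaining $n-k$ terms) produces the same expression as a lower bound directly for every strongly connected $\Gamma$, which is all the theorem needs. The paper's extra step of naming $\Gamma_*$ shows the bound is essentially sharp, but that is not part of the stated inequality, so your more direct route loses nothing here.

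Where you and the paper part ways is at the final scalar inequality
\[
\min_{2\le s\le n} s\,(n+1-s)^{1/s}>e\ln\bigl(n+1-\ln(n+1)\bigr).
\]
You rightly flag this as ``the main obstacle'' and give only an outline (Lambert-$W$ fixed point $\hat s$, the stationarity formula $\psi(s^\star)=s^\star\exp(1-1/(n+1-s^\star))$, then an unspecified ``monotonicity-based comparison''). The paper does \emph{not} prove this inequality in the survey either: it quotes it as the logarithmic inequality of \cite{KalachevSadov_2018}, noting that it was ``first conjectured numerically'' and that a purely analytical proof took a separate paper and a coauthor's ``perseverance''. So your incompleteness at this point matches the paper's own treatment. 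Your stationarity formula and fixed-point localization are correct, but the closing ``monotonicity-based comparison'' is precisely the step that \cite{KalachevSadov_2018} is devoted to, and the sentence you have written does not yet constitute a proof of it.
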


In the proof, the extremal graph $\Gamma_*$ is explicitly identified and it is shown that
$$
\min_\xx S_{\land}(\xx|\Gamma_*)=\min_{1\leq k\leq n-2}
\min_{x,y}
\left(k\left(\frac{x}{y}\right)^{1/k}+(n-k)\frac{y}{x}\right),
$$
so that
$$
 \ln \min_\xx S_{\land}(\xx|\Gamma_*)=\min_{1\leq k\leq n-2}\left(\ln(k+1)+\frac{\ln(n-k)}{k+1}\right).
$$ 
My study of the described extremal graph problem and
its reduction to the minimization problem for the explicit elementary function motivated the paper \cite{KalachevSadov_2018}, where a proof of the inequality --- first conjectured numerically ---
$$
 \ln\ln(r-\ln r)<\min_{0\leq x\leq r-1}
\left(\ln x+\frac{\ln(r-x)}{x}\right),
$$
and of a tight upper bound for the same function were given. (Thanks to my coathor's 
perseverance, the published proof is purely analytical with no reference to numerical evaluations.)

\smallskip
Theorem~\ref{thm:extrmin} implies: 
since $e\ln(41-\ln 41)\approx 9.836> 9.8$, the answer in Problem~\ref{prob:extrminsum} is {\bf no}.

\section{Graphic max-sums and girth}
\label{sec:gmaxsum}

The minimization problem for a graphic max-sum leads 
to a well-known combinatorial quantity.

Recall that {\em girth}\ of a directed graph $\Gamma$,
denoted $g(\Gamma)$, is the length of its shortest loop.

We will also need the notion of {\em strong reduction}.

Introduce the pre-order relation $\leqslant $ on the set $V$ of vertices of a directed graph $\Gamma$: it is the transitive closure of the adjacency relation $v\to v'$.
That is, $v\leqslant v'$ if there exists a directed path from $v$ to $v'$. 
If $v\leqslant v'$ and $v'\leqslant v$, we say that $v$ and $v'$ are equivalent ($v\sim v'$). The set $\hat V=V/\sim$ of equivalence classes is partially ordered by the same relation $\leqslant$.

\begin{definition}
\label{def:strongreduction}
The equivalent classes corresponding to the maximal elements of the partially ordered set $\hat V$
are called the {\em final strong components}\ of the graph $G$. 
\end{definition} 

Informally speaking, final strong components are black holes of the universe that is our graph: from anywhere you can fall into (at least) one of them, and once you are there
--- no way out. 

\begin{theorem}
\label{thm:gmaxsum}
If $\Gamma$ is a strongly connected graph, then
$$
 \inf_{\xx}S_\lor(\xx|\Gamma)=g(\Gamma).
$$
In general, the greatest lower bound of the graphic max-sum for $\Gamma$
is the sum of girths of its final strong components.

In particular, $\min S_\lor(\xx|\Gamma)$ is always an integer.
\end{theorem}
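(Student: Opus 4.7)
The plan is to dispose of the strongly connected case first and then reduce the general case to it by exploiting the fact that out-edges of a final strong component cannot leave the component. The main tool is AM-GM applied to cycles of the functional graph induced by an argmax choice.

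\emph{Strongly connected case, lower bound.} Fix positive $\xx$ and, for each $v\in V$, choose $w(v)\in\Gamma^+(v)$ realizing $\max_{w\in\Gamma^+(v)}x_w$. Then $S_\lor(\xx|\Gamma)=\sum_v x_v/x_{w(v)}$. The map $w\colon V\to V$ is a functional subgraph of $\Gamma$; following its arrows from any starting vertex must eventually enter a directed cycle, and any such cycle is a directed cycle of $\Gamma$, hence of length $\ell\geq g(\Gamma)$. AM-GM applied to the telescoping cyclic product over the vertices $v_1,\dots,v_\ell$ of such a cycle gives $\sum_{i=1}^\ell x_{v_i}/x_{v_{i+1}}\geq \ell\geq g(\Gamma)$, while all remaining terms are positive.

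\emph{Strongly connected case, upper bound.} Pick a shortest cycle $C\colon v_1\to\cdots\to v_g\to v_1$, set $x_{v_i}=1$ for $v_i\in C$, and $x_v=M^{-d(v)}$ for $v\notin C$, where $d(v)$ is the distance from $v$ to $C$ in $\Gamma$ (finite by strong connectedness) and $M$ is a large parameter. For $v_i\in C$ the maximum over $\Gamma^+(v_i)$ equals $1$, attained at $v_{i+1}\in C$ and dominating all off-cycle values $\leq 1$, so the corresponding term equals $1$. For $v\notin C$ there is an out-neighbor $w$ with $d(w)=d(v)-1$, hence the term is at most $1/M$. Letting $M\to\infty$ gives $\inf S_\lor(\xx|\Gamma)\leq g(\Gamma)$.

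\emph{General case.} If $v\in F_i$ and $w\in\Gamma^+(v)$, then $v\leqslant w$ in the preorder on $\hat V$; by maximality of the class $F_i$, we have $v\sim w$, so $w\in F_i$. Hence $\Gamma^+(v)\subset F_i$, and the partial sum $\sum_{v\in F_i} x_v/\max_{w\in\Gamma^+(v)} x_w$ is precisely the max-sum for the strongly connected subgraph $F_i$, which by the first part is $\geq g(F_i)$. The remaining vertices contribute positive terms, so $\inf S_\lor(\xx|\Gamma)\geq\sum_i g(F_i)$. For the matching upper bound, combine the individual $F_i$-assignments (with parameter $M$) with $x_v=\eta^{d(v)}$ for $v\notin\bigcup_i F_i$, where now $d(v)$ is the distance from $v$ to $\bigcup_i F_i$; this is finite because the DAG of strong components forces every forward path into some $F_i$. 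Any such $v$ has an out-neighbor with strictly smaller $d$, hence of larger scale, so its term is $O(\eta)$. Integrality of the infimum is automatic, since each $g(F_i)$ is a positive integer.

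\emph{Main obstacle.} The delicate step is the combined upper bound construction: the scale $M$ on each $F_i$ and the scale $\eta$ outside must be chosen in the right order, with $\eta$ much smaller than the smallest positive value appearing on $\bigcup_i F_i$, so that bridge-vertex contributions vanish without perturbing the approach $g(F_i)+O(1/M)$ on each final component. With that ordering the upper and lower bounds meet at the same integer $\sum_i g(F_i)$, yielding the claimed equality and integrality.
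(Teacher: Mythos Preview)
Your argument is correct and matches the route the paper sketches (AM-GM on a cycle of the argmax functional subgraph $v\mapsto w(v)$ for the lower bound; geometric scaling by distance to a shortest cycle for the upper bound; then the evident reduction to final strong components, whose out-neighbourhoods cannot escape). The only imprecision is labelling the off-component contributions $O(\eta)$ uniformly---for $d(v)=1$ the bound is only $O(\eta M^{D})$, where $D$ is the maximal in-component distance to the chosen cycles---but your ``Main obstacle'' paragraph already repairs this by fixing $M$ first and then sending $\eta\to 0$.
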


This theorem is proved in \cite[\S~3]{S21b}. It is simple.
You will know the mechanics of the proof and understand the appearance of girth, as well as the reason why `$\inf$' rather that `$\min$' is used in the formulation, once you work out Problem~\ref{prob:maxsum}. (AM-GM is relevant again!)

\smallskip
Now it is easy to give an answer in Problem~\ref{prob:extrmaxsum}: {\bf yes}, this can happen. (And, once the value $4.89$ has occured in some round of the game, a value $4+\epsilon$
with arbitrarily small positive $\epsilon$ can be obtained if the participants persist.) Moreover,
the value $2+\epsilon$ can happen: just a
pair of ``symmetric'' friends (that is, friends in the normal old sense) is needed. 

\smallskip
While the analytical side of the max-sum minimization 
is simple, the answer, in terms of girth(s), requires 
evaluation by some combinatorial algorithm.

\smallskip
Extremal problems of graph theory related to girth are difficult. 

Let $\mathfrak{G}_s(n,k)$ be the set of strongly connected directed graphs on $n$ vertices with minimum outdegree $k$. (That is, $\min_{v\in V}|\Gamma^+(v)|=k$.)

The famous {\em Caccetta-H\"aggkvist conjecture}\ (CHC)
\cite[Conjecture~8.4.1]{BangGutin_2002} claims that
$$
 g(\Gamma)\leq \lceil n/k\rceil
\quad\text{for any $\Gamma\in \mathfrak{G}_s(n,k)$}.
$$ 

This conjecture can be illustrated with our Problem~\ref{prob:extrmaxsum} if a quantifier is changed. Let us ask, whether it is {\em guaranteed} that
in the game as defined a value $< 4.9$ can be obtained
(with a suitable choice of the participants' numbers).

In the theoretical formalism the question becomes: whether for {\em any}\ graph $\Gamma\in\mathfrak{G}_s(40,12)$ the inequality
$\inf_{\xx}S_\lor(\xx|\Gamma)<4.9$ is true?

In view of Theorem~\ref{thm:gmaxsum}, it is equivalent to the question: whether for any $\Gamma\in\mathfrak{G}_s(40,12)$ the inequality
$g(\Gamma)\leq 4$ is true?

Assuming the validity of CHC, since $\lceil 40/12\rceil=4$,  we see that the answer is positive. 

I do not know the status of CHC for the pair $(40,12)$. Is it computationally feasible to verify it for these values?

\section{``Monomial quotient'' graphic sums and Shallit's problem}    
\label{sec:prodconstraints} 

The next topical thread begins with Eq.~\eqref{Bratio}.
If the function $\nu(\cdot)$ in its right-hand side were a permutation, then we would be dealing just with a weighted version of the AM-GM problem, as is the case in 
Problem~\ref{prob:minsum}. However, in general the sums \eqref{Bratio} that appear in the course of
minimization of $S_\land(\xx|\Gamma)$ may 
have more complex combinatorial structure.
In particular, a variable $y_B$ can appear in more than one numerator. 

The relevant formalism involves directed graphs with
weighted edges. 

Let $\Gamma$ be a directed graph on the vertex set $V$
and let $E$ be the set of its (directed) edges. 
To every edge $u\in E$ there is assigned a positive real number $w_u$. We treat these numbers as the components
of the vector $\mathbf{w}$ of dimension $|E|$.

If $u$ is an arrow from the vertex $v$ to the vertex $v'$, we write $v=\alpha(u)$, $v'=\beta(u)$.

Let $\yy$ be a vector with components indexed by the set $V$. 

Introduce a graphic sum of ``monomial quotient'' type:
$$
 S_\Gamma^\div(\ww|\yy)=\sum_{u\in E}w_u \frac{y_{\alpha(u)}}{y_{\beta(u)}}
$$

The minimization problem: to find
$$
 f^\div_\Gamma(\ww)=\inf_{\yy} S_\Gamma^\div(\ww|\yy)
$$ 
--- is a basic step in the computation of minima of min-sum discussed in Section~\ref{sec:gminsum}.

If $w_u=1$ for all $u$, we write $S_\Gamma^\div(\ww|\yy)=S_\Gamma(\yy)$
and $f^\div_\Gamma(\ww)=f^\div_\Gamma$
to simplify the notation.

This new minimization problem can be stated as a constrained optimization problem as follows. 

Put $z_u=y_{\alpha(u)}/u_{\beta(u)}$. 
We have to minimize the dot-product 
$$
\langle\ww,\zz\rangle=\sum_{u\in E} w_u z_u
$$
subject to constraints
$P_L(\zz)=1$, where $L$ goes over some {\em basis of cycles}\ of the graph $\Gamma$. 
Here $L$ is a cycle, that is, a closed path consisting of the edges $u_1,\dots,u_\ell$, say, and
$$
 P_L(\zz)=\prod_{i=1}^\ell z_i.
$$ 
 
Thus, the minimization problem for graphic min-sums motivated a systematic study of the minimization problem for monomial graphic quotients. However the latter turned
out to have a nontrivial life of its own. 

The AM-GM problem obviously belongs to this type, the graph $\Gamma$ being just the $n$-cycle. But this is not too impressing.
A much more remarkable example is the function in  Problem~\ref{prob:Shallit}, which for the purpose of this section is better to write in the homogeneous form:
$$
 \sum_{j=1}^n \left(\frac{x_j}{x_0}+\frac{x_0}{x_j}\right)+\sum_{j=1}^{n-1}\frac{x_j}{x_{j+1}}.
$$
If the second sum on the right contained one more term $x_n/x_1$ making it cyclic,
then by the AM-GM the minimum would be attained at $\xx=(1,\dots,1)$ and would be equal to $3n$. 
We also see that $\min g_n(\xx)\leq 3n-1$. 
Thus, if the constant $C$ exists as is claimed, then $C\geq 1$.

Problem~\ref{prob:Shallit} is equivalent to 
the one proposed by J.~Shallit in 1994 \cite{Shallit_1994}, apparently being a summary of a numerical experiment. The value $C\approx 1.3694514$
was given by Shallit already.
The equivalent form was given in \cite{Shallit_1995} as the first step of their solution. However, the  purported solution, while 
containing insightful observations, was not analytically rigorous and recoursed to numerics for critical facts concerning the behaviour of auxiliary sequences. 
In \cite{S21a} I revisited Shallit's problem and filled in the analytical gaps of \cite{Shallit_1995}. The rigorous solution produced an exponentially small estimate for the remainder $o(1)$ in the asymptotic formula as a bonus. 

Let us see how Shallit's problem fits in this section's scheme. Consider the graph $\Gamma$ with $n+1$ vertices
labeled $0,1,\dots,n$ and $3n-1$ edges of three sorts:
(i) $n$ edges $u_{j0}=(j\to 0)$ corresponding to the terms $x_j/x_0$; (ii) $n$ edges $u_{0j}=(0\to j)$ corresponding to the terms $x_0/x_j$;
(iii) $n-1$ edges $u_{j,j+1}$ 
corresponding to the terms $x_j/x_{j+1}$. The basis of cycles for the graph $\Gamma$ contains $2n$ elements:
$n$ cycles of length $2$ of the form $\{u_{0j},u_{j0}\}$
and $n$ cycles of length $3$ of the form $\{u_{0j},u{j,j+1},u_{j+1,0}\}$.
We have $\min_{\xx} g_n(\xx)=f^\div_\Gamma$.

The graphic sum interpretation does not yield a magic solution to Shallit's problem. The actual solution amounts to a detailed analysis of the critical point equations written in the form of a recurrence relation for the components of the minimizer. Yet one aspect of the 
``graphic quotient sums'' theory can be appreciated:
the uniqueness of minimizer.

\begin{theorem}
\label{thm:unique}
For any strongly connected directed graph $\Gamma$ and any set of positive edge weights $w_u$ a positive minimizer $\yy$ for the optimization problem $S_\Gamma(\ww|\yy) \to  \min$
exists. It is unique up to a positive multiple.
\end{theorem}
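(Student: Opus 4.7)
The plan is to convert the problem into convex optimization via the substitution $y_v = e^{t_v}$, under which
$$S_\Gamma^\div(\ww|\yy) = F(\mathbf{t}) := \sum_{u \in E} w_u \exp\!\bigl(t_{\alpha(u)} - t_{\beta(u)}\bigr),$$
and the scaling symmetry $\yy \mapsto \lambda \yy$ becomes the translation $\mathbf{t} \mapsto \mathbf{t} + c\mathbf{1}$, under which $F$ is invariant. I would fix a reference vertex $v_0$ and restrict $F$ to the hyperplane $H = \{\mathbf{t}\in\mathbb{R}^V : t_{v_0} = 0\}$; the claim then reduces to existence and uniqueness of a minimizer of $F|_H$.

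$F$ is convex on $\mathbb{R}^V$ as a positive combination of exponentials of linear functionals. For strict convexity on $H$, I would compute, for any $\mathbf{d}$ with $d_{v_0} = 0$, the second directional derivative
$$\frac{d^2}{ds^2}F(\mathbf{t} + s\mathbf{d})\Big|_{s=0} = \sum_{u \in E} w_u \exp\!\bigl(t_{\alpha(u)} - t_{\beta(u)}\bigr)\bigl(d_{\alpha(u)} - d_{\beta(u)}\bigr)^2.$$
This vanishes only when $d_{\alpha(u)} = d_{\beta(u)}$ for every edge $u$, i.e., when $\mathbf{d}$ is constant on the connected components of the underlying undirected graph. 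Strong connectivity implies connectivity, so $\mathbf{d}$ must be a scalar multiple of $\mathbf{1}$, and then $d_{v_0} = 0$ forces $\mathbf{d} = \mathbf{0}$. Hence $F|_H$ is strictly convex, which yields uniqueness once existence is secured.

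For existence I would verify coercivity of $F|_H$. Suppose some $|t_v|$ becomes large while $t_{v_0} = 0$. If $t_v \gg 0$, choose a directed path $u_1,\dots,u_\ell$ from $v$ to $v_0$ (of length $\ell \leq |V|-1$), which exists by strong connectivity. The telescoping identity $\sum_{i=1}^\ell (t_{\alpha(u_i)} - t_{\beta(u_i)}) = t_v - t_{v_0} = t_v$ forces at least one edge $u_i$ with $t_{\alpha(u_i)} - t_{\beta(u_i)} \geq t_v/(|V|-1)$, making the corresponding term of $F$ grow like $\exp(t_v/(|V|-1))$. The case $t_v \ll 0$ is handled symmetrically using a path $v_0 \to v$. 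So $F|_H$ is continuous and coercive, hence attains its minimum.

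The main obstacle --- and the only place where \emph{strong} (as opposed to merely weak) connectivity is essential --- is the coercivity step: both directions of reachability between $v_0$ and an arbitrary $v$ are needed to penalize both signs of $t_v$. Once coercivity and strict convexity of $F|_H$ are in place, existence and uniqueness of the minimizer follow by standard convex-analytic reasoning; translating back via $y_v = e^{t_v}$ yields the theorem, the scaling ambiguity corresponding precisely to the shift invariance $\mathbf{t} \mapsto \mathbf{t} + c\mathbf{1}$.
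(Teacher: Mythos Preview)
Your proof is correct. The logarithmic substitution $y_v=e^{t_v}$ converts the objective into a sum of exponentials of linear forms, which is convex; your second-derivative computation correctly shows strict convexity transverse to $\mathbf{1}$ (using only weak connectivity), and your coercivity argument correctly uses both directions of reachability furnished by strong connectivity. The conclusion via standard coercive/strictly-convex reasoning is sound.

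As for comparison: the paper does not actually prove Theorem~\ref{thm:unique} here but merely cites Problems~15 and~40A of \cite{S20}. Judging from the surrounding discussion in Section~\ref{sec:prodconstraints}, where the problem is recast as minimizing a linear form $\langle\ww,\zz\rangle$ under multiplicative constraints $P_L(\zz)=1$ over a cycle basis, the cited argument likely proceeds through that constrained formulation (Lagrange multipliers or a direct AM--GM/product-constraint analysis) rather than through your direct convexity route. Your approach has the advantage of being self-contained and requiring no auxiliary combinatorial apparatus; the product-constraint viewpoint, on the other hand, makes the connection to the graph's cycle structure explicit and is what the author exploits elsewhere (e.g.\ in the Shallit example). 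Either way, the content is the same: the exponential change of variables is exactly what linearizes the product constraints, so the two perspectives are dual to one another.
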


This theorem is a combination Problems~15 and 40A in \cite{S20}. 
Once you've got an optimization problem presented in the form as described, you know right away that the minimizer is unique.

In particular, a family of sums generalizing Shallit's
one is proposed in \cite[\S~7]{S20}. Theorem~\ref{thm:unique} spares us of
the daunting task need to verify the uniqueness of a minimizer for each of them indivivually.

\section{The Mavlo-Georgiev inequality and symmetry of the minimizer}    
\label{sec:Mavlo}

In this section I will demonstrate a curious unexpected ``application'' of Theorem~\ref{thm:unique}, which is of a psychological as much as of technical nature.

The following is a simple corollary of Theorem~\ref{thm:unique}.

\begin{theorem} \cite[Problem~19]{S20}
\label{thm:symmetry}
If $\sigma$ is an automorphism of the given directed,
strongly connected, edge-weighted graph $\Gamma$ (we consider $\sigma$ as a permutation of the set $V$), then the minimizer for the problem $S_\Gamma(\ww|\yy) \to  \min$
is $\sigma$-invariant, that is, $y_v=y_{\sigma(v)}$
for any $v\in V$.
\end{theorem}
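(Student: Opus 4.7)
The plan is to use Theorem~\ref{thm:unique} to ``transport'' the minimizer along $\sigma$ and then invoke uniqueness. Let $\yy^*$ be a positive minimizer (which exists by that theorem), and define a new vector $\yy'$ by $y'_v=y^*_{\sigma^{-1}(v)}$ for every $v\in V$; equivalently $y'_{\sigma(v)}=y^*_v$. The goal is to show $\yy'=\yy^*$.

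First I would check that $\yy'$ is again a minimizer. The hypothesis that $\sigma$ is an automorphism of the edge-weighted graph $\Gamma$ means that $\sigma$ induces a bijection $u\mapsto\sigma(u)$ on $E$ with $\alpha(\sigma(u))=\sigma(\alpha(u))$, $\beta(\sigma(u))=\sigma(\beta(u))$, and $w_{\sigma(u)}=w_u$. Reindexing the defining sum via this bijection,
$$
 S_\Gamma(\ww\,|\,\yy')
 =\sum_{u\in E} w_u\,\frac{y'_{\alpha(u)}}{y'_{\beta(u)}}
 =\sum_{u\in E} w_{\sigma(u)}\,\frac{y'_{\sigma(\alpha(u))}}{y'_{\sigma(\beta(u))}}
 =\sum_{u\in E} w_u\,\frac{y^*_{\alpha(u)}}{y^*_{\beta(u)}}
 =S_\Gamma(\ww\,|\,\yy^*),
$$
where the penultimate equality uses $y'_{\sigma(v)}=y^*_v$. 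Thus $\yy'$ attains the same infimum.

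By the uniqueness clause of Theorem~\ref{thm:unique}, $\yy'=c\,\yy^*$ for some constant $c>0$. To pin down $c$, note that the multiset of components of $\yy'$ is a permutation of that of $\yy^*$, so
$$
 \prod_{v\in V} y^*_v
 =\prod_{v\in V} y'_v
 =c^{|V|}\prod_{v\in V} y^*_v,
$$
which forces $c=1$. Therefore $y^*_v=y'_v=y^*_{\sigma^{-1}(v)}$ for every $v\in V$, and replacing $v$ by $\sigma(v)$ yields the desired $\sigma$-invariance $y^*_v=y^*_{\sigma(v)}$.

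The only potentially delicate point is Step~2: one must be sure that ``automorphism of an edge-weighted directed graph'' is understood in the natural sense (a vertex bijection whose induced edge map preserves incidences and weights), so that the reindexing is lawful. Granted that, the argument is essentially bookkeeping --- all the real work has been absorbed into Theorem~\ref{thm:unique}.
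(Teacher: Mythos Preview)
Your argument is correct and is exactly the intended derivation: the paper states only that Theorem~\ref{thm:symmetry} is ``a simple corollary of Theorem~\ref{thm:unique}'', and what you wrote is the natural unpacking --- transport the minimizer by the automorphism, invoke uniqueness up to scale, and kill the scalar (your product trick works, as would iterating $\sigma$ to its order). There is nothing to add.
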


Let us now turn to Problem~\ref{prob:Mavlo}. It is a homogeneous form of the inequality
\begin{equation}
\label{Mavlo-Georgiev}
 \frac{1}{u(1+v)}+\frac{1}{v(1+w)}+\frac{1}{w(1+u)}\geq
\frac{3}{1+uvw}.
\end{equation}
(Put $x=(uvw)^{1/3}$ and let $u=xb/a$, $v=xc/b$, $w=xa/c$.)


I learned about the latter inequality from Alex Bogomolny's remarkable ``Cut-the-knot'' website \cite{cuttheknotineq}, where the story and a reference can be found. I will call it the Mavlo-Georgiev by the names of the proposer and the author of a short solution.

\begin{proof}[Proof of the Mavlo-Georgiev inequality]
The idea is to replace ``difficult'' denominators by ``easy'' ones, to get a sum in the form similar to that 
in the proof of Theorem~\ref{thm:p-1}.

Let us express $a$, $b$ and $c$ in terms of $A=b+cx$, $B=c+ax$ and $C=a+bx$. We have
$c-ux^2=B-Cx$, so $Ax^2-Cx+B=c(1+x^3)$, and the solution
is
\begin{align}
& \tilde a=Bx^2-Ax+C,
\nonumber
\\
& \tilde b=Cx^2-Bx+A,
\nonumber
\\
&
\tilde c=Ax^2-Cx+B,
\nonumber
\end{align} 
where $\tilde a=a(1+x^3)$ etc.

Denote the left-hand side of the inequality we are proving
by $L$. Then
\begin{align}
(1+x^3)L &=\frac{Bx^2-Ax+C}{A}+
\frac{Cx^2-Bx+A}{B}+
\frac{Ax^2-Cx+B}{C}
\nonumber
\\[1ex]
& =\frac{B}{A}x^2+\frac{A}{B}
+\frac{C}{B}x^2+\frac{B}{C}
+\frac{A}{C}x^2+\frac{C}{A}-3x
\label{Gsum-Georgiev}
\\[1ex]
&\geq 3\cdot 2\sqrt{x^2}-3x=3x,
\nonumber
\end{align}
by the AM-GM inequality. Hence $L\leq 3x/(1+x^3)$.
\end{proof}

On a close look this is but a reformulation of Georgiev's proof given in \cite{cuttheknotineq}. Let us now observe that the expression \eqref{Gsum-Georgiev},
if we ignore the ``constant term'' $-3x$ (independent of the optimization variables: we treat $x$ as a parameter),
is a monomial quotient sum $S_{\Gamma}(\ww|\yy)$
corresponding to the complete directed graph on three vertices $A$, $B$, $C$, with edge weights 
$w_{BA}=w_{CB}=w_{AC}=x^2$ and $w_{AB}=w_{BC}=w_{CA}=1$.

Knowing Theorem~\ref{thm:unique} and its consequence Theorem~\ref{thm:symmetry}, I know at once that the minimizer must by cyclically symmetric, that is, proportional to $[1,1,1]$. Then I know without calculations that the minimum is attained at $A=B=C=1$
and is equal to $3/(1+x)$. This is a strict improvement of Mavlo-Georgiev's estimate unless $x=1$,
because $(1+x^3)/x-(x+1)=(x-1)^2(x+1)$. 
Summarizing and translating the result into the variables 
$u,v,w$ as in \eqref{Mavlo-Georgiev}, we state

\begin{theorem}[A sharp form of the Mavlo-Georgiev inequality]
For any positive values of the parameters, the inequalities
$$
\frac{a}{b+cx}+\frac{b}{c+ax}+\frac{c}{a+bx}\geq\frac{3}{1+x}
$$
and
$$
 \frac{1}{u(1+v)}+\frac{1}{v(1+w)}+\frac{1}{w(1+u)}\geq
\frac{3}{(uvw)^{1/3}\left(1+(uvw)^{1/3}\right)}.
$$
hold true.
\end{theorem}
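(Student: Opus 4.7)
My plan is to reuse the algebraic substitution $A=b+cx$, $B=c+ax$, $C=a+bx$ already carried out in the proof of the (non-sharp) Mavlo-Georgiev inequality, but to replace the final AM-GM step by the symmetry statement Theorem~\ref{thm:symmetry}.

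After the substitution and after multiplication by $(1+x^3)$, the left-hand side $L$ of the first claimed inequality becomes
$$
(1+x^3)L = S_\Gamma(\ww|\yy) - 3x,
$$
where $\yy=(A,B,C)$ and $S_\Gamma(\ww|\cdot)$ is the monomial-quotient sum on the complete directed graph $\Gamma$ on three vertices, with edge weights $x^2$ along the arrows $B\!\to\!A$, $C\!\to\!B$, $A\!\to\!C$ and $1$ along the three reverse arrows. Since the map $(a,b,c)\mapsto(A,B,C)$ sends the positive octant into itself, any lower bound for $S_\Gamma(\ww|\yy)$ valid for all positive $\yy$ automatically produces a lower bound for $L$.

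Next I would observe that the cyclic permutation $A\!\to\!B\!\to\!C\!\to\!A$ is an automorphism of this edge-weighted graph, which is strongly connected. Theorem~\ref{thm:symmetry} therefore guarantees that the minimizer of $S_\Gamma(\ww|\cdot)$ is proportional to $(1,1,1)$. Evaluating at $A=B=C$ gives the minimum value $3(x^2+1)$, so
$$
(1+x^3)L \geq 3(x^2+1)-3x = \frac{3(1+x^3)}{1+x},
$$
which yields the first inequality. The main obstacle is purely conceptual: to spot that Georgiev's post-substitution six-term expression is already a graphic monomial-quotient sum for an $S_3$-symmetric weighted graph, after which Theorem~\ref{thm:symmetry} replaces his AM-GM step and automatically tightens it (with equality exactly at $x=1$, consistent with the strict improvement $(1+x^3)/x-(1+x)=(x-1)^2(x+1)$ noted in the preceding discussion).

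Finally, the second inequality is a homogenization of the first. Setting $x=(uvw)^{1/3}$ and $u=xb/a$, $v=xc/b$, $w=xa/c$, a direct check yields $u(1+v)=x(b+cx)/a$ together with its two cyclic analogs, so the left-hand side of the second inequality equals $L/x$; the claimed bound $3/((uvw)^{1/3}(1+(uvw)^{1/3}))$ then follows immediately from $L\geq 3/(1+x)$.
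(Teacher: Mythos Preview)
Your proof is correct and follows essentially the same route as the paper's own argument: reuse Georgiev's substitution $A=b+cx$, $B=c+ax$, $C=a+bx$, recognize the resulting six-term expression as a monomial-quotient sum on the weighted complete digraph on three vertices, invoke Theorem~\ref{thm:symmetry} to force the minimizer to be proportional to $(1,1,1)$, evaluate there, and then translate back via $u=xb/a$, $v=xc/b$, $w=xa/c$ for the second inequality. One small slip in your commentary: for $x\neq 1$ the weighted graph is only $C_3$-symmetric, not $S_3$-symmetric (a transposition would swap the $x^2$-weighted and $1$-weighted $3$-cycles), but your actual argument uses only the cyclic automorphism $A\!\to\!B\!\to\!C\!\to\!A$, so this does not affect the proof.
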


Why was it easy to overlook this improvement?
Of course, looking at the expression \eqref{Gsum-Georgiev}
a temptation to apply the AM-GM and be done immediately is very strong. On the other hand, equally strong in such elementary problems is the suspicion that 
the symmetry of problem is inherited by the minimizer.
It is not always easy to prove. Here, too, if you write
the critical point equations, they are quite a mess.
Given Problem~\ref{prob:Mavlo} to solve and seeing that
the right-hand side is not equal to the value 
of the left-hand side at the symmetric tuple, one can even suspect (did you?) that the actual critical points are not symmetric vectors.

\smallskip
The method of ``trivialization of denominators'' used in the reduction of the Mavlo-Georgiev objective function to a grapic monomial quotient sum looks promising enough to deal with Shapiro's sums. After all, Problem~\ref{prob:Nesbitt} is a particular case of Problem~\ref{prob:Mavlo} and at the same time the case
$n=3$ of Shapiro's inequality\dots.

Indeed, for any odd $n$, the system of equations
$x_{i+1}+x_{i+2}=A_i$, $i=1,\dots,n$ (with addition of indices modulo $n$) is uniquely solvable for $\xx$.
(It is not so if $n$ is even. For example, if $n=4$,
we have the necessary condition for consistency of the system: $A_1+A_3=A_2+A_4$, because both sums must be equal to $x_1+x_2+x_3+x_4$).

What prevents us from passing from variables $x$ to variables $A$, getting the optimization problem for a
monomial quotient sum, etc.?

Answer: nothing. The obstacle lies with ``etc''. In the
resulting graph not all edge weights will be positive
and the wonderful strength of Theorem~\ref{thm:unique}
will be lost. Even if the minimizer is symmetric as in the cases of odd $n\leq 23$, this fact does not follow by Theorem~\ref{thm:symmetry}. 

You can check that in the case $n=5$ the components
of the vector $\xx$ are expressed as
$x_1=(-A_1+A_2-A_3+A_4+A_5)/2$ and the pattern repeats cyclically. The resulting graph is the complete directed graph on 5 vertices. The quotient $x_1/A_1$ contains the
nonconstant monomial quotient $-A_3/(2A_1)$ with negative coefficient. In our graph the negative edge weight $u_{3,1}=-1/2$ shows up.  No roses without thorns, alas!

\section{Diananda-type sums with variable lengths of \\ denominators} 
\label{sec:maxforward}
Now I will discuss a generalization of the Diananda sums in a different direction, unrelated to graphic sums.

Theorem~\ref{thm:diananda} implies that $A_{n,k}\geq n\ln 2$. That is, for all $n\geq k\geq $ and all positive $n$-dimensional vectors $\xx$ the cyclic inequality
$$
 \sum_{j=1}^n\frac{x_j}{\frac{1}{k}(x_{j+1}+\dots+x_{j+k})}\geq n\ln 2
$$
is true. 
One is tempted to let $k$ in the right-hand side of (\ref{Snkp}) vary with $j$.

Suppose a vector $\kk$ of dimension $n$ is given whose components are natural numbers in the range $[1,\dots, n]$. Introduce the sums
$$
 S_{n,\kk}(\xx)=\sum_{j=1}^n\frac{x_j}{\frac{1}{k_j}(x_{j+1}+\dots+x_{j+k_j})}.
$$
What can be said about the best lower bound for $S_{n,\kk}$ uniform with respect to $\kk$. Is it still
linear in $n$? Is it at all unbounded as $n\to\infty$?
The next theorem gives an answer.

\begin{theorem}
\label{thm:maxforward}
Denote
$$
 A_{n,*}=\sup_{\kk}\sup_{\xx} S_{n,\kk}(\xx),
$$
where $k$ goes over the set of all integer-valued $n$-vectors with components in the range $[1,\dots, n]$.
Then
\begin{equation}
\label{Anmax_asym}
A_{n,*}=e\ln n-A+O\left(\frac{1}{\ln n}\right)
\end{equation}
as $n\to\infty$. Here $A\approx 1.704656$ is a numerical constant.
\end{theorem}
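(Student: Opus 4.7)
The asymptotic \eqref{Anmax_asym} matches, in its leading term, the Kalachev--Sadov estimate of Theorem~\ref{thm:extrmin}. This strongly suggests reducing the analysis of $A_{n,*}$ to the graphic min-sum theory of Section~\ref{sec:gminsum}. To every admissible $\kk$ I associate the ``arc digraph'' $\Gamma_\kk$ on $V=\{1,\dots,n\}$ whose out-neighborhood of vertex $j$ is the consecutive arc $\{j+1,\dots,j+k_j\}\pmod n$. Since $k_j\geq 1$, the Hamiltonian cycle $1\to 2\to\cdots\to n\to 1$ sits inside $\Gamma_\kk$, so $\Gamma_\kk$ is strongly connected; the elementary inequality $\min\leq \mathrm{AM}\leq\max$ then yields the sandwich
\[
 S_\lor(\xx\mid\Gamma_\kk)\;\leq\;S_{n,\kk}(\xx)\;\leq\;S_\land(\xx\mid\Gamma_\kk),
\]
which places $A_{n,*}$ between Theorem~\ref{thm:gmaxsum} and Theorem~\ref{thm:extrmin}.

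On one side I would realize the Kalachev--Sadov extremal graph $\Gamma_*$ from Theorem~\ref{thm:extrmin} in ``consecutive-arc'' form. $\Gamma_*$ admits a presentation with a single distinguished vertex carrying the large out-neighborhood of size $k^*\sim n/\ln n$ and the remaining vertices each with singleton out-neighborhoods, which is exactly the shape of $\Gamma_\kk$ for a $\kk$ with one large component and the rest equal to $1$. Evaluating $S_{n,\kk}$ at the two-block minimizer of $S_\land(\cdot\mid\Gamma_*)$ reduces the optimization to the one-variable problem $\min_{1\leq k\leq n-2}[\ln(k+1)+\ln(n-k)/(k+1)]$ analyzed in \cite{KalachevSadov_2018}; a Laplace-type expansion around its minimum delivers the constant $A$ together with the $O(1/\ln n)$ remainder in \eqref{Anmax_asym}.

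On the other side I would show that no other choice of $\kk$ or $\xx$ does essentially better. Here the block decomposition \eqref{Bratio} is the natural tool: by Theorem~\ref{thm:unique} the critical points of the associated monomial-quotient sum for any given preferrential arrangement are unique up to scale, so only a combinatorially finite list of candidate block patterns needs to be checked, and on each of them the ratios $\mathrm{AM}(x_{j+1},\dots,x_{j+k_j})/\min(x_{j+1},\dots,x_{j+k_j})$ stay bounded by an absolute constant. The slack between $S_{n,\kk}$ and $S_\land(\cdot\mid\Gamma_\kk)$ at such near-extremal configurations is thus $O(1)$ in log scale and gets absorbed into the $O(1/\ln n)$ remainder once combined with the refined lower bound for $\min_\xx S_\land(\cdot\mid\Gamma_\kk)$ from \cite{KalachevSadov_2018}.

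The main obstacle is precisely this last step: quantifying the AM-versus-min discrepancy uniformly in $\kk$ and in near-extremal $\xx$, so that the sharp constant $A$ --- not merely the leading factor $e$ --- survives the transfer from $S_\land$ to $S_{n,\kk}$. I expect this to rest on a perturbative analysis of the Lagrange multiplier equations for $S_{n,\kk}$ at the two-block extremizer, which must rule out long-tail configurations in which a handful of very small denominator entries would inflate the leading constant. Without that control the argument above yields only a qualitative $A_{n,*}=e\ln n+O(1)$, which already refutes linear growth but does not pin down the value of $A$.
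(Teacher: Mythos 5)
Your sandwich $S_\lor(\xx|\Gamma_\kk)\le S_{n,\kk}(\xx)\le S_\land(\xx|\Gamma_\kk)$ is valid, but it cannot reach the statement, which lives precisely at the constant order. On the lower-bound side, Theorem~\ref{thm:gmaxsum} gives only $\inf_\xx S_\lor(\xx|\Gamma_\kk)=g(\Gamma_\kk)$, and over admissible $\kk$ the girth of a consecutive-arc digraph can be $O(1)$ (components $k_j$ close to $n$ create loops of length $1$ or $2$), so this side of the sandwich yields no logarithmic bound at all. On the upper-bound side, the extremal value behind Theorem~\ref{thm:extrmin} is $\min_k\min_{x,y}\bigl(k(x/y)^{1/k}+(n-k)y/x\bigr)=\min_k\,(k+1)(n-k)^{1/(k+1)}=e\ln n+o(1)$, i.e.\ \emph{without} the $-A$; the theorem asserts $A_{n,*}=e\ln n-A+O(1/\ln n)$ with $A\approx 1.70$. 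So the discrepancy between the arithmetic mean and the minimum in the denominators is exactly of constant size and is where $A$ comes from; it cannot be ``absorbed into the $O(1/\ln n)$ remainder.'' Your supporting claims do not hold up either: $S_{n,\kk}$ is not a monomial-quotient sum, so Theorem~\ref{thm:unique} does not apply to it directly, and an absolute-constant bound on the ratio $\mathrm{AM}/\min$ would perturb the sum multiplicatively, which threatens even the factor $e$, let alone $A$. Your closing admission --- that the argument as written gives only $A_{n,*}=e\ln n+O(1)$ --- is the gap: that is a strictly weaker statement than \eqref{Anmax_asym}, and no mechanism in the proposal produces the constant $A$.

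The paper's route is entirely different and bypasses the min/max graphic machinery. In \cite{S22a} the exact identity $A_{n,*}=F(n)$ is proved, where $F$ is defined by the Bellman-type functional equation \eqref{F}, equivalently $F=\inf_n F_n$ with $F_n$ as in \eqref{Fn}; in particular the near-minimizers are chains with roughly $\ln n$ distinct scales, not the two-block configurations you propose (it is the averaging over a long geometric stretch, not the minimum, that lets the sum dip below $e\ln n$). The asymptotics $F(x)=e\ln x-A+e\|b+\ln x\|^2/(2\ln x)+O((\ln x)^{-2})$ of Theorem~\ref{thm:F1asym} are then established in \cite{S22b} by analyzing the minimizer as a trajectory of a discrete dynamical system; the constant $A$ and the $O(1/\ln n)$ term in \eqref{Anmax_asym} come from that analysis, not from the logarithmic inequality of \cite{KalachevSadov_2018}, which concerns the min-sum extremal problem. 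To salvage your plan you would need an asymptotically exact reduction of $\inf_\kk\inf_\xx S_{n,\kk}$ to a one-dimensional recursion of type \eqref{F}, keeping the arithmetic-mean structure intact rather than bounding it by $\min$ or $\max$.
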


This is the main result of \cite{S22a}. It will be illustrated with Problem~\ref{prob:maxforward}.
Indeed, the posed question is equivalent to: can $S_{n,\kk}(\xx)$ ever be less than 22 if $n=2022$.
Put another way, is it true that $A_{2022,*}<22$?

Since $e \ln(2022)-A\approx 18.99$, we can rather confidently answer in the affirmative. The difference
$22-18.99$ seems big enough to accommodate the asymptotic remainder.


\section{A perturbed functional equation for the \\ AM-GM problem}

As a matter of fact, in \cite{S22a} I did not prove
Theorem~\ref{thm:maxforward} in full but showed that
$A_{n,*}=F(n)$, where $F(\cdot)$ is a certain function
whose asymptotic behaviour I examined in detail
in a separate paper \cite{S22b}. I was able to find the main part of the remainder $O(1/\ln x)$ 
corresponding to $O(1/\ln n)$ in \eqref{Anmax_asym}.

The function $F(x)$ can be defined by a functional equation that looks similar to \eqref{AMGMfeq}:
\begin{equation}
\label{F}
 F(x)=
 \min_{0<y<x-1} \left(F(y)+\frac{x}{y+1}\right),
\quad x>1,
\end{equation}
with initial condition $F(x)=x$ for $0\leq x\leq 1$.

It is also shown in \cite{S22b} that
$$
 F(x)=\inf_{n\in\mathbb{N}} F_n(x),
$$
where
\begin{equation}
\label{Fn}
 F_n(x)=\inf_{t_1,\dots,t_{n-1}>0} \left(t_1+\frac{t_2}{t_1+1}+\dots+\frac{t_{n-1}}{t_{n-2}+1}+\frac{x}{t_{n-1}+1}\right).
\end{equation}

\begin{theorem}
\label{thm:F1asym}
Let $\|y\|$ denote the distance from the number $y$ to the nearest integer. The asymptotic behaviour of the function $F(x)$ is as follows:
$$
 F(x)=e\ln x-A+\frac{e\|b+\ln x\|^2}{2\ln x}+O\left(\frac{1}{(\ln x)^{2}}\right),
$$
where $A$ is the same constant as in Theorem~\ref{thm:maxforward} and $b\approx 0.69739$. 
\end{theorem}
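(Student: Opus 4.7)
The plan is to exploit the envelope representation $F(x)=\inf_{n\in\mathbb N}F_n(x)$ and to reduce the problem to a refined asymptotic expansion of the smooth family $F_n(x)$, treating $n$ temporarily as a continuous parameter and only minimising over integer $n$ at the end.

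First I would analyse the critical point of \eqref{Fn}. Writing $u_j=t_j+1$ with the convention $u_0=1$, the Lagrange equations become the second-order nonlinear recurrence
$$u_{j+1}=\frac{u_j^{2}}{u_{j-1}}+1,\qquad j=1,\dots,n-2,$$
with terminal condition $x=u_{n-1}^{2}/u_{n-2}$. Substituting these into the objective and telescoping, the critical value simplifies to
$$F_n(x)=-1+\sum_{j=1}^{n-1}\frac{u_j}{u_{j-1}}.$$
Setting $r_j=u_j/u_{j-1}$, one has $r_{j+1}-r_j=1/u_j$, so the ratios are strictly increasing with summable increments; to leading order $r_j\to e$ and $u_j\sim e^{j-\theta}$ for some phase $\theta\in[0,1)$ set by matching the geometric regime to the initial data $u_0=1$.

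Next I would linearise around this asymptotic profile. Writing $u_j=e^{j-\theta}(1+\varepsilon_j)$ converts the recurrence into an approximately linear one for $\varepsilon_j$ with a geometrically decaying source, which can be solved explicitly by summation. The terminal relation then ties $\theta$ to the pair $(n,x)$, and the integer nature of $n$ forces $\theta$ to encode the fractional part of $n-\ln x-b$, for a constant $b$ determined by the matching. Summing the resulting series for $F_n$ and expanding in $1/\ln x$, this should produce the uniform asymptotic
$$F_n(x)=e\ln x-A+\frac{e}{2\ln x}\bigl(n-\ln x-b\bigr)^{2}+O\!\left(\frac{1}{(\ln x)^{2}}\right),$$
valid for $|n-\ln x-b|$ bounded, with $A$ and $b$ identified as explicit convergent series extracted from the linearisation. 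For $n$ outside this window, $F_n(x)$ should exceed the parabola by a margin far larger than the claimed error, so the infimum is attained inside it; minimising the parabola over $n\in\mathbb N$ gives $\min_n(n-\ln x-b)^2=\|b+\ln x\|^2$, which combined with the expansion produces the stated formula.

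The main obstacle is this second step. The recurrence is driven on two disparate scales: near $j=0$, where the additive $+1$ dominates and $u_j$ is of order unity (this source produces the constant $A$), and for $j$ large, where the multiplicative behaviour $u_j\sim e^{j-\theta}$ fixes the phase $b$. Because neither contribution is negligible at order $1/\ln x$, extracting $A$ and $b$ rigorously requires a careful matched asymptotic analysis in which a discrete initial-layer computation is continued into the geometric regime and then summed, while the quadratic correction in $(n-\ln x-b)$ must be tracked consistently across both regions rather than being read off a single naive expansion. I expect this matching to be the technical heart of the proof.
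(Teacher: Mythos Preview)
Your plan follows the route the paper sketches (the full argument is deferred to \cite{S22b}): treat the minimiser of $F_n$ as a trajectory of the discrete dynamical system arising from the critical-point equations, analyse its asymptotics, and then optimise over integer $n$; the paper explicitly flags the ``monotonicity, convexity, etc.'' issues that you identify as the matched-asymptotic difficulty between the initial layer and the geometric regime.

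Two slips are worth correcting before you push further. First, the telescoped critical value should be
\[
F_n(x)=-1+r_1+\sum_{j=1}^{n-1}r_j
\]
rather than $-1+\sum_{j=1}^{n-1}r_j$: check $n=2$, where the critical point is $u_1=\sqrt{x}$ and $F_2(x)=2\sqrt x-1$, not $\sqrt x-1$. Second, the convergence $r_j\to e$ is not an intrinsic property of the recurrence --- for each starting value $u_1>1$ the ratios increase to \emph{some} limit $r_\infty(u_1)>1$, and it is the terminal condition $x=u_{n-1}^2/u_{n-2}$ together with the minimisation over $n$ that singles out $r_\infty=e$. The phase $\theta$ is therefore fixed by $(n,x)$ through the terminal data, not by ``matching the geometric regime to the initial data $u_0=1$'' as you wrote; keeping this causality straight is exactly where the delicate work the paper alludes to will sit.
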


The proof of Theorem~\ref{thm:F1asym} is rather long,
comparable in some ways with the proof of Shallit's conjecture (see Section~\ref{sec:prodconstraints}). In both cases the minimizer is treated
as a trajectory of a discrete dynamical system. 

To get an idea of the proof, specifically, a construction
of the minimizer clear of technical nuances concerning monotonicity, convexity, etc., which the overall proof has to deal with, you can try Problem~\ref{prob:modifiedAMGM}. (It is not as easy as it may look at first sight but doable. The minimizer is $[1,\,4,\,25/2]$.) 

Much simpler than Theorem~\ref{thm:F1asym} is its analog \cite[Theorem~2]{S22b} concerning the function $f(x)$ related to the 
the AM-GM. Recall Eq.~\eqref{AMGMfeq}, a prototype of
\eqref{F}, and define
$$
 f(x)=\inf_n f_{n\in\mathbb{N}}(x)=\inf_{n\in\mathbb{N}}
nx^{1/n}.
$$

\begin{theorem}
\label{thm:F0asym}
The following asymptotic formula holds:
$$
 f(x)=e\ln x+\frac{e\|\ln x\|^2}{2\ln x}+O\left(\frac{1}{(\ln x)^{2}}\right).
$$
\end{theorem}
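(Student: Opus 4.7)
The plan is to analyze $\phi(t) := t\,x^{1/t}$ as a smooth function of a continuous variable $t > 0$, locate its unique continuous minimum, and then carefully compare the two integer neighbours. Setting $L := \ln x$, a one-line calculation gives $(\ln \phi)'(t) = (t - L)/t^{2}$, so $\phi$ is strictly decreasing on $(0,L)$, strictly increasing on $(L,\infty)$, and attains its unique minimum $\phi(L) = L\,e^{L/L} = eL$ at $t = L$. This already pins down the leading term $e \ln x$ and reduces the problem to comparing the two candidates $m := \lfloor L \rfloor$ and $m+1 := \lceil L \rceil$; no estimate on integers far from $L$ is needed.

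Next I would Taylor-expand $\phi(L+\delta) = (L+\delta)\exp\bigl(L/(L+\delta)\bigr)$ around $\delta = 0$ for $|\delta| \le 1$. Using $L/(L+\delta) = 1 - \delta/L + \delta^{2}/L^{2} - \delta^{3}/L^{3} + \cdots$, exponentiating, and multiplying by $L + \delta$, a routine collection of powers of $\delta/L$ gives
\[
  \phi(L+\delta) \;=\; eL + \frac{e\,\delta^{2}}{2L} - \frac{2e\,\delta^{3}}{3 L^{2}} + O\!\left(\frac{1}{L^{3}}\right),
\]
so in particular $\phi(L+\delta) = eL + e\delta^{2}/(2L) + O(1/L^{2})$ uniformly for $|\delta| \le 1$. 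The two integer candidates correspond to $\delta_{-} = -\{L\}$ (for $n = m$) and $\delta_{+} = 1 - \{L\}$ (for $n = m+1$); whichever has smaller absolute value has $|\delta| = \|L\|$, which upon substitution produces the claimed quadratic correction $e\|\ln x\|^{2}/(2\ln x)$.

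It remains to justify selecting that smaller $|\delta|$. From the expansion,
\[
  \phi(L+\delta_{+}) - \phi(L+\delta_{-}) \;=\; \frac{e}{2L}\bigl(1 - 2\{L\}\bigr) + O\!\left(\frac{1}{L^{2}}\right).
\]
When $\|L\|$ is bounded away from $1/2$, the $1/L$ term fixes the sign and forces the integer minimiser to be the nearest integer to $L$. When $\{L\}$ lies within $O(1/L)$ of $1/2$, the two candidates agree up to $O(1/L^{2})$ and simultaneously $\|L\|^{2} = 1/4 + O(1/L)$, so both choices match the right-hand side of Theorem~\ref{thm:F0asym} up to the asserted error. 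Combining this with the Taylor expansion gives the theorem.

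The main obstacle is really just the transitional regime $\|L\| \approx 1/2$: one must check that the symmetric dependence of the answer on $\|\ln x\|^{2}$ (rather than on the signed $\delta$) genuinely absorbs the ambiguity between $m$ and $m+1$. Once that is cleared, the proof is an exercise in second-order Taylor expansion of an elementary function of one variable; no dynamical-system or recurrence analysis (unlike the proof of Theorem~\ref{thm:F1asym}) is required.
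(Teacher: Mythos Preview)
Your argument is correct. The derivative computation, the unimodality of $\phi(t)=t\,x^{1/t}$ (which legitimately restricts the integer search to $\lfloor L\rfloor$ and $\lfloor L\rfloor+1$), and the Taylor expansion
\[
\phi(L+\delta)=eL+\frac{e\,\delta^{2}}{2L}-\frac{2e\,\delta^{3}}{3L^{2}}+O(L^{-3}),\qquad |\delta|\le 1,
\]
all check out. One small streamlining: your case split around $\{L\}=1/2$ is not needed. Writing $A=eL+e\{L\}^{2}/(2L)$ and $B=eL+e(1-\{L\})^{2}/(2L)$, you have $\phi(m)=A+O(L^{-2})$, $\phi(m+1)=B+O(L^{-2})$, and $\min(A,B)=eL+e\|L\|^{2}/(2L)$ exactly. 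Since $\min$ is $1$-Lipschitz in the sup norm, $\min(\phi(m),\phi(m+1))=\min(A,B)+O(L^{-2})$ immediately, with no transitional regime to analyse.

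As to comparison with the paper: the paper does not actually prove Theorem~\ref{thm:F0asym} here but quotes it from \cite[Theorem~2]{S22b}, remarking only that it is ``much simpler'' than the dynamical-system argument needed for Theorem~\ref{thm:F1asym}. Your direct second-order expansion about the continuous minimiser $t=L$ is exactly the kind of elementary calculus proof the paper alludes to, and it is self-contained.
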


I do not know whether this result is new; in any case it is far less known than many other facts about and around the AM-GM.


\section{Conclusion}
The variable in a cyclic sum that was seen in the numerator of the very first term 
is found in the denominator at the end of the loop.

Likewise, our tour has completed a full circle and returned to the AM-GM, featuring another side of it. 

\clearpage

\end{document}